\documentclass[11pt,a4]{amsart}
\usepackage[margin=2.5cm]{geometry}
\usepackage{oldgerm}
\usepackage{amssymb} 
\usepackage{mathrsfs} 
\usepackage{amsmath}
\usepackage{latexsym}
\usepackage[all]{xy}
\usepackage[dvips]{graphics}
\usepackage[dvipdfmx,bookmarks=true]{hyperref}
\usepackage{amsthm}
\usepackage{enumerate}

\allowdisplaybreaks

%
%
\theoremstyle{plain} 
\newtheorem{thm}{Theorem}[section] 
\newtheorem{prop}[thm]{Proposition} 
\newtheorem{lem}[thm]{Lemma} 
%
%
%
%
%
\theoremstyle{definition} 
%
%
%
%
%
\newtheorem{rem}[thm]{Remark} 
\newtheorem{fact}[thm]{Fact} 
\newtheorem*{acknow}{Acknowledgements} 
\theoremstyle{remark} 
\newtheorem*{notation}{Notation}
%

\def\bfchi{\boldsymbol{\chi}}

\title[A semi-ordinary $p$-stabilization of Siegel Eisenstein series]
{A semi-ordinary $p$-stabilization of 
Siegel Eisenstein series for symplectic groups and its $p$-adic interpolation
}

\author{Hisa-aki KAWAMURA}
\email{kawamura.hisaaki@hiro.kindai.ac.jp}

\date{February 23rd, 2023}

\if0
\dedicatory{
}
\fi

\subjclass[2020]{11F46 (Primary); 11F30, 11F33 (Secondary)}
\keywords{Siegel modular forms, Siegel Eisenstein series, $\Lambda$-adic forms, $p$-adic analytic families}

\begin{document}
\maketitle

\begin{abstract}
Given a prime number $p$, we introduce a certain $p$-stabilization of holomorphic Siegel Eisenstein series 
for the symplectic group ${\rm Sp}(2n)_{/\mathbb{Q}}$ such that the resulting forms 
satisfy the semi-ordinary condition at $p$, 
that is, the associated eigenvalue of a generalized Atkin $U_p$-operator is a $p$-adic unit. 
In addition, we derive an explicit formula for all Fourier coefficients of such $p$-stabilized Siegel Eisenstein series, and 
conclude their $p$-adic interpolation problems. 
This states the existence of a quite natural generalization of the ordinary $\Lambda$-adic Eisenstein series 
which have been constructed by Hida and Wiles for ${\rm GL(2)}_{/\mathbb{Q}}$. 
\end{abstract}


\section{Introduction}

\vspace*{2mm}
Given a positive integer $M$, a Dirichlet character $\chi$ modulo $M$ and an integer $\kappa \ge 2$ with $\chi(-1)=(-1)^{\kappa}$, 
the classical (holomorphic) Eisenstein series $E_{\kappa,\,\chi} \bigl(=E_{\kappa,\,\chi}^{(1)}(z)\bigr)$ is defined as follows: 
For each $z \in \mathfrak{H}_1=\{z \in \mathbb{C}\mid{\rm Im}\, z >0\}$, put
\[
E_{\kappa,\,\chi}(z)
:= 
\left\{{2\,G(\chi^{-1})(-2\pi\sqrt{-1})^\kappa \over M^\kappa (\kappa-1)!}\right\}^{-1}
\sum_{(c,d) \in \mathbb{Z}^2 \smallsetminus \{(0,0)\}} \chi^{-1}(d) \,(c\,Mz+d)^{-\kappa},
\] 
where $G(\chi^{-1})=
\sum_{n=1}^{M} \chi^{-1}(n) \exp(2\pi\sqrt{-1}\,n/M)$. 
As is well-known, $E_{\kappa,\,\chi}$ gives rise to a holomorphic modular form of weight $\kappa$ and 
nebentypus character $\chi$ for the congruence subgroup $\Gamma_0(M)$ of ${\rm SL}(2, \mathbb{Z})$ unless $\kappa=2$ and $\chi$ is trivial (or principal
). 
If $\chi$ is trivial (i.e., $M=1$) or primitive (i.e., the conductor of $\chi$ equals $M>1$), then $E_{\kappa,\,\chi}$ possesses the Fourier expansion 
\[
E_{\kappa,\,\chi}(z)={L(1-\kappa,\chi) \over 2}+ \sum_{m =1}^{\infty} \sigma_{\kappa-1,\, \chi}(m)\, q^m, 
\]
where 
$L(s, \chi)=
\prod_{\scriptstyle l \,{\rm : \,prime}} (1-\chi(l)\,l^{-s})^{-1}$, $\sigma_{\kappa-1,\,\chi}(m)=
\sum_{\scriptstyle 0<d \,\mid\, m} \chi(d)\,d^{\kappa-1}$ and $q=\exp(2\pi \sqrt{-1}z)$. 
This implies that 
$E_{\kappa,\,\chi}$ is a non-cuspidal Hecke eigenform such that the associated $L$-function $L(s,E_{\kappa,\,\chi})$ is taken of the form 
\[
L(s,E_{\kappa,\,\chi})=\zeta(s) L(s-\kappa+1,\chi). 
\] 

Let $p$ be a fixed prime number not dividing $M$, which we assume to be odd for simplicity. 
Put 
\begin{equation}
E_{\kappa,\,\chi}^{*}(z)
:=E_{\kappa,\,\chi}(z) - \chi(p)\,p^{\kappa-1} E_{\kappa,\,\chi}(pz).
\end{equation}
We easily see that 
$E_{\kappa,\,\chi}^{*}$ is also a non-cuspidal Hecke eigenform of weight $\kappa$ 
and nebentypus character $\chi$ for $\Gamma_0(Mp) \,(\subset \Gamma_0(M))
$, and its 
Fourier expansion is taken of the form
\begin{equation}
E_{\kappa,\,\chi}^{*}(z)={L^{\{p\}}(1-\kappa,\chi) \over 2}+\sum_{m=1}^{\infty}
\sigma_{\kappa-1,\,\chi}^{\{p\}}(m) \,q^m, 
\end{equation}
where $L^{\{p\}}(s,\chi):=(1-\chi(p)\,p^{-s})\,L(s,\chi)=
\prod_{l\ne p} (1-\chi(l)\,l^{-s})^{-1}$ and 
\[
\sigma_{\kappa-1,\,\chi}^{\{p\}}(m):=\sum_{\scriptstyle 0<d\, \mid\,m, \atop 
{\scriptstyle \gcd(d,p)=1}} \chi(d)\,d^{\kappa-1}. 
\]
It turns out that for $E_{\kappa,\,\chi}^{*}$, all the Hecke eigenvalues outside $p$ agree with $E_{\kappa,\,\chi}$,   
but the eigenvalue of Atkin's $U_p$-operator, which is converted from the $p$-th Hecke operator $T_p$ (cf. \cite{A-L70, Miy06}), is 
$\sigma_{\kappa-1,\,\chi}^{\{p\}}(p)=1$. Namely, we have 
\[
L(s,E_{\kappa,\,\chi}^{*})=
\zeta(s)L^{\{p\}}(s-\kappa+1,\chi). 
\]
This type of 
normalization $E_{\kappa,\,\chi} \mapsto E_{\kappa,\,\chi}^{*}$ given by 
eliminating the $p$-part of every Fourier coefficient or equivalently, eliminating 
the latter half of the Euler factor at $p$ was firstly introduced by Serre \cite{Ser73}, and 
we refer to it as the {\it ordinary\footnote{In general, a Hecke eigenform $f$ is said to be {\it ordinary} at $p$ if the eigenvalue of $U_p$ (or $T_p$) 
is a $p$-adic unit. } $p$-stabilization}. 
Here we should mention that every Fourier coefficient of $E_{\kappa,\,\chi}^*$ depends on the weight $\kappa$ $p$-adically. 
Indeed, it follows immediately from Fermat's little theorem that for each positive integer $m$, 
the function $\kappa \mapsto \sigma_{\kappa-1,\,\chi}^{\{p\}}(m)$ can be extended to an 
analytic function defined on the ring of $p$-adic integers $\mathbb{Z}_p$. 
In addition, the constant term $L^{\{p\}}(1-\kappa,\chi)/2$ is also interpolated by the $p$-adic $L$-function in the sense of Kubota-Leopoldt 
or Deligne-Ribet \cite{D-R80}.  
This fact turns out to be the following theorem due to Hida and Wiles: 

\begin{fact}[cf. Proposition 7.1.1 in \cite{Hid93}; Proposition 1.3.1 in \cite{Wil88}]

{\it Suppose that a Dirichlet character $\chi$ modulo $M$ is either trivial or primitive, and 
$M$ is not divisible by $p$. 
Let $\Lambda=\mathbb{Z}_p[\chi][[X]]$ be the one-variable power series ring over $\mathbb{Z}_p[\chi]$  
and $F_{\Lambda}$ the field of fractions of $\Lambda$, respectively. 
\if0
$\mathcal{O}_{\chi}=\mathbb{Z}_p[\chi]$ be the field obtained by adjoining all values of $\chi$ to 
the $p$-adic number field $\mathbb{Q}_p$ 
and $R_{\chi}=K_{\chi}[[X]]$ the power series ring in one variable $X$ over $K_{\chi}$, 
respectively. 
\fi
For each integer $a$ with $0 \le a < p-1$, 
there exists a formal Fourier expansion 
\[
\mathcal{E}_{\chi{\omega^a}}(X)
\if0
\mathcal{E}_{\omega^a}(X)
(=\mathcal{E}_{\omega^a}^{(1)}(X))
\fi
=\sum_{m=0}^{\infty} \mathcal{A}_{\chi\omega^a}(m;\,X)\, q^m \in 
F_{\Lambda}
[[q]],
\]
where $\omega
: \mathbb{Z}_p^{\times} \twoheadrightarrow \mu_{p-1}=\{ x \in \mathbb{Z}_p^{\times} \mid x^{p-1}=1\}
$\footnote{Since $\mu_{p-1} \simeq (\mathbb{Z}/p\mathbb{Z})^{\times}$, $\omega$ can be identified with a Dirichlet character modulo $p$ in a natural way. } denotes the Teichm\"uller character,
such that for each integer $\kappa>2$ with $\chi(-1)=(-1)^{\kappa}$ and $\kappa\equiv a \pmod{p-1}$\footnote{The latter condition is regarded as if $\omega^{a-\kappa}$ corresponds to the trivial Dirichlet character. }, we have 
\[
\mathcal{E}_{\chi{\omega^a}}
((1+p)^{\kappa} -1)=E_{\kappa,\,\chi}^{*}. 
\]
Moreover, if $\varepsilon : 1+p\mathbb{Z}_p \to (\mathbb{Q}_p^{\rm alg})^\times$ is a 
character of 
exact order $p^r$ for some integer $r \ge 0$\footnote{Namely, $\varepsilon$ can be also regarded as a primitive Dirichlet character of conductor $p^{r+1}$. }, then for each integer $\kappa\ge 2$, 
we have 
\[
\mathcal{E}_{\chi{\omega^a}}
(\varepsilon(1+p)(1+p)^{\kappa} -1)=E_{\kappa,\,\chi\omega^{a-\kappa}\varepsilon}
\]
as long as $\omega^{a-\kappa}\varepsilon$ is non-trivial. }
\end{fact}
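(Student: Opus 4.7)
The plan is to construct $\mathcal{E}_{\chi\omega^a}(X)$ one Fourier coefficient at a time, combining an elementary $p$-adic interpolation of the divisor sums $\sigma_{\kappa-1,\chi}^{\{p\}}(m)$ with the Kubota--Leopoldt (Deligne--Ribet) $p$-adic $L$-function for the constant term. The only structural input is the canonical decomposition $\mathbb{Z}_p^{\times} \simeq \mu_{p-1} \times (1+p\mathbb{Z}_p)$: every $d \in \mathbb{Z}_p^{\times}$ factors uniquely as $d = \omega(d)\langle d\rangle$ with $\langle d\rangle \in 1+p\mathbb{Z}_p$, and since $1+p\mathbb{Z}_p$ is topologically procyclic with generator $1+p$, there is a unique $s(d) \in \mathbb{Z}_p$ satisfying $\langle d\rangle = (1+p)^{s(d)}$.

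For each positive integer $m$, I would set
\[
\mathcal{A}_{\chi\omega^a}(m;\,X) \;:=\; \sum_{\substack{0 < d \,\mid\, m \\ \gcd(d,p)=1}} \chi(d)\, \omega(d)^{a-1}\, \langle d\rangle^{-1}\, (1+X)^{s(d)} \;\in\; \Lambda,
\]
where $(1+X)^{s(d)} := \sum_{n \ge 0}\binom{s(d)}{n}X^n$ is well-defined in $\Lambda$ because the function $s \mapsto \binom{s}{n}$ extends continuously from $\mathbb{Z}$ to $\mathbb{Z}_p$. Substituting $X = (1+p)^\kappa - 1$ gives $(1+X)^{s(d)} = (1+p)^{\kappa\,s(d)} = \langle d\rangle^\kappa$, and using the congruence $\kappa \equiv a \pmod{p-1}$ to rewrite $\omega(d)^{a-1} = \omega(d)^{\kappa-1}$, each summand collapses to $\chi(d)\,(\omega(d)\langle d\rangle)^{\kappa-1} = \chi(d)\,d^{\kappa-1}$, matching $\sigma_{\kappa-1,\chi}^{\{p\}}(m)$. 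Substituting $X = \varepsilon(1+p)(1+p)^\kappa - 1$ instead yields the extra factor $\varepsilon(\langle d\rangle) = \varepsilon(d)$ (since $\varepsilon$ is trivial on $\mu_{p-1}$), producing $(\chi\omega^{a-\kappa}\varepsilon)(d)\,d^{\kappa-1}$; the restriction $\gcd(d,p)=1$ becomes automatic because the hypothesized non-triviality of $\omega^{a-\kappa}\varepsilon$ forces $p$ to divide the conductor of $\chi\omega^{a-\kappa}\varepsilon$, so the full divisor sum $\sigma_{\kappa-1,\chi\omega^{a-\kappa}\varepsilon}(m)$ is recovered.

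For the constant term, I would invoke the Kubota--Leopoldt $p$-adic $L$-function attached to $\chi\omega^a$: there exists a power series $G_{\chi\omega^a}(X)$ lying in $\Lambda$ when $\chi\omega^a$ is non-trivial, and in $F_\Lambda$ with a simple pole at $X=0$ when $\chi\omega^a$ is trivial, whose specializations at $(1+p)^\kappa - 1$ and at $\varepsilon(1+p)(1+p)^\kappa - 1$ interpolate $L^{\{p\}}(1-\kappa,\chi\omega^{a-\kappa})$ and $L^{\{p\}}(1-\kappa,\chi\omega^{a-\kappa}\varepsilon)$, respectively. Setting $\mathcal{A}_{\chi\omega^a}(0;\,X) := \tfrac{1}{2}\,G_{\chi\omega^a}(X)$ and assembling $\mathcal{E}_{\chi\omega^a}(X) := \sum_{m \ge 0} \mathcal{A}_{\chi\omega^a}(m;\,X)\, q^m \in F_\Lambda[[q]]$ then immediately yields both specialization formulas. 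The only genuinely substantive obstacle is the existence and interpolation property of the $p$-adic $L$-function; the remainder of the argument is the bookkeeping identity $(1+X)^{s(d)}|_{X=(1+p)^\kappa-1} = \langle d\rangle^\kappa$ together with the decomposition $d = \omega(d)\langle d\rangle$. The appearance of the fraction field $F_\Lambda$ (rather than $\Lambda$) in the statement reflects precisely the simple pole of the Kubota--Leopoldt $p$-adic $L$-function in the trivial-character case.
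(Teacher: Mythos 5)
Your construction is correct and is essentially the same argument the paper itself uses (in its genus-$n$ generalization, Theorem 4.4, whose proof specializes to exactly this when $n=1$): the Deligne--Ribet/Kubota--Leopoldt power series for the constant term, and the substitution $d^{\kappa-1}=\omega(d)^{\kappa-1}\langle d\rangle^{-1}(1+X)^{s(d)}\big|_{X=(1+p)^{\kappa}-1}$ for the divisor sums, with the $\varepsilon$-twist and the pole at $X=0$ handled just as in the paper. No gaps; the paper merely cites Hida and Wiles for this statement rather than reproving it, and your write-up is the standard proof.
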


Strictly speaking, it turns out that $\mathcal{E}_{\chi\omega^a}(X) \in \Lambda[[q]]$ 
unless $\chi\omega^a$ is trivial. 
However, we note that if $\chi\omega^a$ is trivial, then $\mathcal{E}_{\chi\omega^a}
(X)$ (more precisely, the constant term $\mathcal{A}_{\chi\omega^a}(0;\,X)$) has a simple pole 
at $X=0$ and thus   
$\overline{\mathcal{E}}_{\chi\omega^a}
(X):=X\cdot\mathcal{E}_{\chi\omega^a}
(X) \in \Lambda[[q]]$. 
Hence the above-mentioned fact implies 
that $\mathcal{E}_{\chi\omega^a}(X)$ or $\overline{\mathcal{E}}_{\chi\omega^a}(X)$ according as $\chi\omega^a$ is non-trivial or trivial, 
is a $\Lambda$-adic form of level $Mp^{\infty}$ associated with character $\chi\omega^a$, 
which interpolates 
families of non-cuspidal ordinary Hecke eigenforms
$\{E_{\kappa,\,\chi}^*\}$ and $\{E_{\kappa,\,\chi\omega^{a-\kappa}\varepsilon}\}$ 
or their constant multiples, 
given by varying the weight $\kappa$ 
$p$-adically analytically (cf. \cite{Wil88,Hid93}). 
In this context, we refer to it as the {\it ordinary $\Lambda$-adic Eisenstein series} of genus 1 and level $Mp^{\infty}$ associated with character $\chi\omega^a$. 

\vspace*{2mm}
The aim of the present article is to 
formulate a similar statement in the case of Siegel modular forms, that is, 
automorphic forms on the symplectic 
group ${\rm Sp}(2n)_{/\mathbb{Q}}$ of an arbitrary genus $n\ge1$. 
Let us explain how it goes briefly: 
Given a positive integer $\kappa>n+1$ and a Dirichlet character $\chi$ modulo a positive integer $M$, 
let $E_{\kappa,\,\chi}^{(n)}$ be the classical (holomorphic) Siegel Eisenstein series  
of weight $\kappa$ and nebentypus character $\chi$ for $\Gamma_0(M)^{(n)} \subset {\rm Sp}(2n,\mathbb{Z})$, 
to be described in the subsequent \S2, 
whenever either 
\begin{enumerate}
\item[{\rm (i)}] $M=1$, and thus, $\chi$ is trivial or \smallskip
\item[{\rm (ii)}] $M>1$ is odd, $\chi$ is primitive and 
$\chi^2$ is locally non-trivial at every prime $l \mid M$\footnote{Namely, 
if we factor $\chi$ as $\chi = \prod_{l\,\mid\, M} \chi_l$, then for each prime factor $l$ of $M$, $\chi_l$ is not a quadratic character. }. 
\end{enumerate} 
For an arbitrary prime number $p \nmid M$, we define, in \S4, a certain 
$p$-stabilization map $E_{\kappa,\,\chi}^{(n)} \mapsto (E_{\kappa,\,\chi}^{(n)})^*$ 
by means of the action of a linear combination of $(U_{p,n})^i$\,'s for $i=0,1, \cdots, n$, 
where $U_{p,n}$ denotes a generalized Atkin 
$U_p$-operator to be defined in \S 3 below, 
so that the associated eigenvalue of $U_{p,n}$ is $1$. 
This formulation is natural 
from the viewpoint of which 
in the case where $n=1$, $E_{\kappa,\,\chi}^{*}=(E_{\kappa,\,\chi}^{(1)})^*$ can be easily written 
in terms of Atkin's original operator $U_p=U_{p,1}$ as 
\begin{equation}
E_{\kappa,\,\chi}^{*} = E_{\kappa,\,\chi} \,\|_{\kappa}\, (U_p -\chi(p)\,p^{\kappa-1}). \tag{1'}
\end{equation}
Following in the tradition of Skinner-Urban \cite{S-U06}, in the case where $n >1$, 
we call $(E_{\kappa,\,\chi}^{(n)})^*$ {\it semi-ordinary} at $p$. 
Moreover, we derive an explicit formula for all Fourier coefficients of $(E_{\kappa,\,\chi}^{(n)})^*$, 
which can be regarded 
as a natural generalization of the equation (2) (cf. Theorem 4.2 below). 
As a consequence of the above issues, we first show in Theorem 4.4 below the existence of a formal Fourier expansion 
with coefficients in $F_{\Lambda}
$ associated to $\chi\omega^a$ with some nonnegative integer $a$, 
whose specialization at $X=(1+p)^\kappa -1$ coincides with the semi-ordinary $p$-stabilized Siegel Eisenstein series $(E_{\kappa,\,\chi}^{(n)})^*$ 
for any $\kappa$ taken as above.  
Finally in \S 5, for a fixed odd prime number $p$, we show that a suitable constant multiplication of  
the above-mentioned formal Fourier expansion 
gives rise to a $\Lambda$-adic form of genus $n$ and level $Mp^{\infty}$ (or tame level $M$ in the sense of Taylor \cite{Tay88}) 
associated with character $\chi\omega^a$, 
which can be viewed as a satisfactory generalization of Fact 1.1 (cf. Theorem 5.4 below). 

\if0
\vspace*{3mm}
\begin{acknow}
The author is deeply grateful to Professors 
S. B{\"o}cherer, H. Hida, T. Ikeda, H. Katsurada, A.A. Panchishkin, V. Pilloni, 
R. Schulze-Pillot and J. Tilouine  
for their valuable suggestions and comments. 
This research was partially supported by the JSPS Grant-in-Aid for Young Scientists (No.26800016). 
\end{acknow}
\fi

\smallskip

\begin{notation}
We summarize here some notation we will use in the sequel. 
We denote by $\mathbb{Z},\, \mathbb{Q},\, \mathbb{R},$ and $\mathbb{C}$ the ring of integers, fields of rational numbers, real numbers and complex numbers, 
respectively. Let $\mathbb{Q}^{\rm alg}$ denote the algebraic closure of $\mathbb{Q}$ sitting inside $\mathbb{C}$. 
We put $e(x)=\exp(2\pi \sqrt{-1}x)$ for $x \in \mathbb{C}$. 
Given a prime number $p$, we denote by $\mathbb{Q}_p$, $\mathbb{Z}_p$ and $\mathbb{Z}_p^{\times}$ 
the field of $p$-adic numbers, the ring of $p$-adic 
integers and the group of $p$-adic units, 
respectively. 
Hereinafter, given a prime number $p$, we fix an algebraic closure $\mathbb{Q}_p^{\rm alg}$ of $\mathbb{Q}_p$ 
and an embedding $\iota_p : \mathbb{Q}^{\rm alg}\hookrightarrow \mathbb{Q}_p^{\rm alg}$ 
once for all. Let ${\rm val}_p$ denote the $p$-adic valuation on $\mathbb{Q}_p^{\rm alg}$ 
normalized so that ${\rm val}_p(p)=1$, and $|*|_p$ the corresponding norm on $\mathbb{Q}_p^{\rm alg}$, respectively.  
Let $e_p$ be the continuous additive character of $\mathbb{Q}_p^{\rm alg}$ such that $e_p(x)=e(x)$ for all $x \in \mathbb{Q}$. 
Let $\mathbb{C}_p$ be the completion of the normed space $(\mathbb{Q}_p^{\rm alg}, |*|_p)$.  
We also fix, once for all, an isomorphism 
$\widehat{\iota}_p: \mathbb{C} \stackrel{\sim}{\to} \mathbb{C}_p$ 
such that the diagram \vspace*{-1mm}
\[\vspace*{-3mm}
\begin{array}{lcl}
\hspace*{1mm}\mathbb{C} & 
\underset{\simeq}{\stackrel{\widehat{\iota}_p}{\to}
}
& \hspace*{1mm}\mathbb{C}_p \\
\hspace*{2mm}\rotatebox{90}{$\hookrightarrow$} & 
& \hspace*{2mm}\rotatebox{90}{$\hookrightarrow$} \\
\mathbb{Q}^{\rm alg} & \underset{\iota_p}{\hookrightarrow} & \mathbb{Q}_p^{\rm alg}
\end{array} \vspace*{2mm}
\]
is commutative. Given a prime number $p$, put 
\[
{\bf p}=\left\{\begin{array}{cl}
4 & \text{ if $p=2$}, \\
p & \text{ otherwise}. 
\end{array}\right.
\]
Let $\mathbb{Z}_{p,{\rm tor}}^{\times}
$ be the torsion subgroup of $\mathbb{Z}_p^{\times}$, that is, $\mathbb{Z}_{2,{\rm tor}}^{\times}
=\{\pm 1\}$ and $\mathbb{Z}_{p,{\rm tor}}^{\times}=\mu_{p-1}$ 
if $p \ne 2$. We define the 
Teichm\"uller character 
$\omega : \mathbb{Z}_p^{\times} \twoheadrightarrow \mathbb{Z}_{p,{\rm tor}}^{\times}$ by putting $\omega(x)=\pm 1$ according as $x \equiv \pm 1 \pmod{4\mathbb{Z}_2}$ 
if $p=2$, and 
$\omega(x)=\displaystyle\lim_{i \to \infty} x^{p^i}$ for $x \in \mathbb{Z}_p^{\times}$ if $p \ne 2$. 
In addition, for each $x \in \mathbb{Z}_p^{\times}$, put $\langle x \rangle:=\omega(x)^{-1}x \in 1+{\bf p}\mathbb{Z}_p$. Then we have a canonical isomorphism 
\vspace*{-2mm} 
\[
\begin{array}{ccc}
\mathbb{Z}_p^{\times} &\stackrel{\sim}{\to}& \mathbb{Z}_{p,{\rm tor}}^{\times}
\times (1 + {\bf p} \mathbb{Z}_p) \\[1mm]
x &\mapsto& (\omega(x),\, \langle x \rangle). 
\end{array}
\vspace*{-1mm}
\]
We note that the maximal torsion-free subgroup 
$1+{\bf p}\mathbb{Z}_p$ of $\mathbb{Z}_p^{\times}$ is topologically cyclic, that is, 
$1+{\bf p}\mathbb{Z}_p=(1+{\bf p})^{\mathbb{Z}_p}$. 
As already mentioned in \S1, the Teichm\"uller character $\omega$ gives rise to a Dirichlet character $\omega:(\mathbb{Z}/{\bf p} \mathbb{Z})^{\times} \simeq \mathbb{Z}_{p,{\rm tor}}^{\times}  
\hookrightarrow \mathbb{C}_p^{\times} \simeq \mathbb{C}^{\times}$. 
Let $\varepsilon : 1+{\bf p}\mathbb{Z}_p \to 
(\mathbb{Q}_p^{\rm alg})^{\times}$ be a character of finite order. More precisely, if
$\varepsilon$ has exact order $p^m$ for some nonnegative integer $m$, then $\varepsilon$ optimally factors through 
\[
1+{\bf p}\mathbb{Z}_p/(1+{\bf p}\mathbb{Z}_p)^{p^m} 
\simeq (1+{\bf p})^{\mathbb{Z}_p}/(1+{\bf p})^{p^{m}\mathbb{Z}_p} 
\simeq \mathbb{Z}_p/p^{m}\mathbb{Z}_p 
\simeq \mathbb{Z}/p^m \mathbb{Z}.
\]  
Since $(\mathbb{Z}/p^{m}{\bf p} \mathbb{Z})^{\times} \simeq (\mathbb{Z}/{\bf p} \mathbb{Z})^{\times} \times \mathbb{Z}/p^m \mathbb{Z}$, we may naturally regard $\varepsilon$ as a Dirichlet character  
of conductor $p^{m}{\bf p}$. 
By abuse of notation, we will often identify such $p$-adic characters $\omega$ and $\varepsilon$ with 
the corresponding Dirichlet characters whose conductors are powers of $p$ in the sequel. 

\medskip

Given a positive integer 
$n$, let ${\rm GSp}(2n)
$ be the group of symplectic similitudes over $\mathbb{Q}$, that is, 
\[
{\rm GSp}(2n)
:=
\left\{\, g \in {\rm GL}(2n)\,
\right|  \left. {}^t g \,
J\,
g = \nu(g) 
J 
\textrm{ for some } \nu(g) \in \mathbb{G}_m
\,\right\}
, \]
where 
$
J =
\left[
\begin{smallmatrix}
0_n
 & -1_n \\
1_n & 
0_n
\end{smallmatrix}
\right] 
$ with the $n \times n$ unit (resp. zero) matrix $1_n$ {(resp. $0_n$)}, 
and 
${\rm Sp}(2n)$ the derived group of ${\rm GSp}(2n)$ characterized by the exact sequence 
\[
1 \to {\rm Sp}(2n) \to {\rm GSp}(2n) \stackrel{\nu}{\to} \mathbb{G}_m
 \to 1, 
\]
respectively. Namely, ${\rm GSp}(2)={\rm GL}(2)$ and ${\rm Sp}(2)={\rm SL}(2)$ in this setting. 
\if0
We denote by 
${\rm P}_{2n}$ the Siegel parabolic subgroup of ${\rm Sp}(2n)$, which has a Levi decomposition 
${\rm P}_{2n}={\rm M}_{2n}{\rm N}_{2n}$ 
with  
\begin{eqnarray*}
{\rm M}_{2n}&=&\left\{ \left.
{\rm m}(A):=\left[\begin{array}{cc}
A & 0_n \\
0_n & {}^t\!A^{-1}
\end{array}\right] \ \right| A \in {\rm GL}(n) \right\}, \\[2mm]
{\rm N}_{2n}&=&\left\{ \left.
{\rm n}(B):=
\left[\begin{array}{cc}
1_n & B \\
0_n & 1_n
\end{array}\right] \ \right| B \in {\rm Sym}_{n}(\mathbb{Q}) \right\}.
\end{eqnarray*}
Throughout the present article, we often identify ${\rm GL}(n)$ 
(resp.\,\,${\rm Sym}_n(\mathbb{Q})$) with ${\rm M}_{2n}$ (resp.\,\,${\rm N}_{2n}$) via $A \mapsto {\rm m}(A)$ (resp.\,\,$B \mapsto {\rm n}(B)$). 
\fi
We note that every real point $g=\left[\begin{smallmatrix}
A & B \\
C & D
\end{smallmatrix}
\right] \in {\rm GSp}(2n,\mathbb{R})$ with $\nu(g)>0$, where $A,\,B,\,C,\,D \in {\rm Mat}_{n \times n}(\mathbb{R})$, acts on the Siegel upper-half space 
\[
\mathfrak{H}_n := \left\{ Z =X+\sqrt{-1}\,Y\in {\rm Mat}_{n\times n}(\mathbb{C}) \left|\, {}^t Z =Z,\, 
Y > 0\ (\textrm{positive-definite})\right.\right\}
\]
of genus $n$ via the linear transformation $Z \mapsto g \langle Z \rangle=(AZ+B)(CZ+D)^{-1}$. 
If $F$ is a function on $\mathfrak{H}_n$, then for each $\kappa \in \mathbb{Z}$, 
we define 
the slash action of $g$ on $F$ by 
\[
(F\, \|_{\kappa}\, g)(Z)
:= \nu(g)^{n\kappa-n(n+1)/2} 
\det(CZ+D)^{-\kappa} F(g \langle Z \rangle).
\] 
For each positive integer $N$, we shall consider the following congruence subgroups of level $N$
for the full-modular group ${\rm Sp}(2n,\mathbb{Z})$: 
\[
\Gamma_0(N)^{(n)} \, ({\rm resp. }\ \Gamma_1(N)^{(n)})
:=\left\{ 
\gamma \in {\rm Sp}(2n,\mathbb{Z}) \left|\,\,
\gamma \equiv 
\left[\begin{array}{cc}
* & * \\
0_{n} & *
\end{array}\right] \left({\rm resp.}\left[\begin{array}{cc}
* & * \\
0_{n} & 1_{n}
\end{array}\right]\right) \!\!\!\pmod{N} 
\right. \right\}.
\]
\if0
\begin{enumerate}[\upshape (i)]
\item[] $\Gamma_0(N)^{(n)}
:=\left\{ 
\gamma \in {\rm Sp}_{2n}(\mathbb{Z}) \left|\,\,
\gamma \equiv 
\left[\begin{array}{cc}
* & * \\
0_{n} & *
\end{array}\right] \pmod{N} 
\right. \right\}$,\\[1mm]

\item[] $\Gamma_1(N)^{(n)}
:=\left\{ 
\gamma \in {\rm Sp}_{2n}(\mathbb{Z}) \left|\,\,
\gamma \equiv 
\left[\begin{array}{cc}
* & * \\
0_{n} & 1_{n}
\end{array}\right] \pmod{N} 
\right. \right\}$. 
\end{enumerate} 
\fi
For each $\kappa \in \mathbb{Z}
$, 
let us denote by 
$\mathscr{M}_{\kappa}(
\Gamma_1(N)
)^{(n)}
$ 
the space of ({\it holomorphic}) 
{\it Siegel modular forms} of genus $n$, weight $\kappa$ 
and level $N$, that is, $\mathbb{C}$-valued holomorphic functions $F$ on $\mathfrak{H}_{n}$ satisfying the following two conditions: 
\begin{center}
\vspace*{-2mm}
\begin{minipage}[t]{0.975\textwidth}
\begin{enumerate}[\upshape (i)]

\item $F\, \|_{\kappa}\, \gamma = F$ \, for any 
$\gamma \in 
\Gamma_1(N)^{(n)}
$. \vspace*{1mm}

\item For each $\gamma \in {\rm Sp}(2n,\mathbb{Z})$, the function $F\, \|_{\kappa}\, \gamma$ possesses a Fourier expansion of the form
\[
(F\, \|_{\kappa}\, \gamma)(Z) = 
\displaystyle\sum_{T \in {\rm Sym}_n^{*}(\mathbb{Z})}
A_{F,\gamma}(T)\, e({\rm tr}(TZ)),
\]

\item[\hspace*{6mm}] 
where 
${\rm Sym}_n^{*}(\mathbb{Z})$ is 
the set of all half-integral symmetric matrices of degree $n$ over $\mathbb{Z}$, namely,     
\[
\hspace*{10mm}
{\rm Sym}_n^{*}(\mathbb{Z}):=
\{T=\left[\,t_{ij}\,\right] \in {\rm Sym}_{n}(\mathbb{Q})\,|\,
t_{ii},\,2t_{ij} \in \mathbb{Z}\,\,(1 \leq i < j \leq n)
\},\]
and ${\rm tr}(*)$ denotes the trace.  
Then it is satisfied that 
\begin{center}
$A_{F,\gamma}(T)=0$ unless $T \ge 0$ (positive-semidefinite) 
\end{center}
for all $\gamma \in {\rm Sp}(2n,\mathbb{Z})$. 
\end{enumerate}
\end{minipage}
\end{center}
A Siegel modular form $F \in \mathscr{M}_{\kappa}(\Gamma_1(N))^{(n)}$ is said to be {\it cuspidal} 
if it is satisfied that $A_{F,\gamma}(T) = 0$ unless $T > 0$ 
for all $\gamma\in {\rm Sp}(2n,\mathbb{Z})$. We denote by $\mathscr{S}_{\kappa}(\Gamma_1(N))^{(n)}$ the subspace of $\mathscr{M}_{\kappa}(\Gamma_1(N))^{(n)}$ consisting of all cuspidal forms. 
Given a Dirichlet character $\chi : (\mathbb{Z}/N\mathbb{Z})^{\times} \to \mathbb{C}^{\times}$, we denote by  
$\mathscr{M}_{\kappa}(\Gamma_0(N),\,\chi)^{(n)}$ (resp. $\mathscr{S}_{\kappa}(\Gamma_0(N),\,\chi)^{(n)}$) the subspace of $\mathscr{M}_{\kappa}(\Gamma_1(N))^{(n)}$ (resp. $\mathscr{S}_{\kappa}(\Gamma_1(N))^{(n)}$) consisting of all forms $F$ with nebentypus character $\chi$, that is, 
\[
F\, \|_{\kappa}\, \gamma = \chi(\det D) F \, \textrm{ for any } \gamma
=\left[
\begin{smallmatrix}
  A & B  \\
  C & D  \\
\end{smallmatrix}
\right] 
\in \Gamma_0(N)^{(n)}.   
\]
In particular, whenever $\chi$ is trivial, we naturally write 
$\mathscr{M}_{\kappa}(\Gamma_0(N))^{(n)}=\mathscr{M}_{\kappa}(\Gamma_0(N), \,{\rm triv})^{(n)}$ 
and ${\mathscr{S}_{\kappa}(\Gamma_0(N))^{(n)}=\mathscr{S}_{\kappa}(\Gamma_0(N),\,{\rm triv})^{(n)}}$, 
respectively. 

\medskip

For a given pair of $Z=[\,z_{ij}\,] \in \mathfrak{H}_n$ and $T=[\,t_{ij}\,] \in {\rm Sym}_n^{*}(\mathbb{Z})$, put
\[
{\bf q}^{T}:=e({\rm tr}(TZ))
=\prod_{i=1}^{n} q_{ii}^{t_{ii}} \prod_{i<j \leq n} q_{ij}^{2t_{ij}}, 
\]
where $q_{ij}=e(z_{ij})\, (1\leq i \leq j \leq n)$. 
Since $\left[\begin{smallmatrix}
1_n & S \\
0_n & 1_n
\end{smallmatrix}\right] \in \Gamma_1(N)^{(n)} \subset \Gamma_0(N)^{(n)}$ for each $S \in {\rm Sym}_n(\mathbb{Z})$, 
we easily see that if $F \in \mathscr{M}_{\kappa}(
\Gamma_1(N)
)^{(n)}$ (or $\mathscr{M}_{\kappa}(\Gamma_0(N), \chi)^{(n)}$), then 
$F$ possesses a Fourier 
expansion of the form
\[
F(Z)=\sum_{\scriptstyle T \in {\rm Sym}_n^{*}(\mathbb{Z}), 
\atop {\scriptstyle T \ge 0}
} A_{F}(T)\, 
{\bf q}^{T},    
\]
which is regarded as belonging to the ring $\mathbb{C}[\,q_{ij}^{\pm 1}\,|\,1 \leq i < j \leq n\,][[q_{11},\cdots,q_{nn}]]$. 
Given a ring $R$, 
we write 
$$R[[{\bf q}]]^{(n)}:=R[\,q_{ij}^{\pm 1}\,|\,1 \leq i < j \leq n\,][[q_{11},\cdots,q_{nn}]],$$ 
in a similar fashion to the notation of the 
ring of formal $q$-expansions 
$
R[[q]]$.  
In particular, if $F\in \mathscr{M}_{\kappa}(
\Gamma_1(N)
)^{(n)}$ is a Hecke eigenform (i.e., a simultaneous eigenfunction of all Hecke operators whose similitude is coprime to $N$), then it is well-known that 
the field $K_F$ obtained by adjoining all Fourier coefficients (or equivalently, all Hecke eigenvalues) of $F$ to $\mathbb{Q}$ is an algebraic 
number field.  
Thus, by virtue of the presence of $\iota_p$ and $\widehat{\iota}_p$, we may regard 
$F \in K_F[[{\bf q}]]^{(n)}$ as sitting inside $\mathbb{C}[[{\bf q}]]^{(n)}$ and $\mathbb{C}_p[[{\bf q}]]^{(n)}$ interchangeably. 
For further details on the basic theory of Siegel modular forms set out above, see \cite{A-Z95} or \cite{Fre83}. In particular, a comprehensive introduction to the theory of elliptic modular forms and Hecke operators can be found in \cite{Miy06}. 
\end{notation}

\section{Siegel Eisenstein series for symplectic groups}

In this section, we review some elementary facts on the Siegel Eisenstein series 
defined for ${\rm Sp}(2n)_{/\mathbb{Q}}$ of an arbitrary genus $n \ge 1$. 
In particular, we describe a explicit form of its Fourier expansion according to some previous works of Shimura (e.g., \cite{Shim82,
Shim94b}), which is the starting point for the subsequent arguments. 

\vspace*{3mm}
Let $N$ be a positive integer and $\chi : (\mathbb{Z}/N\mathbb{Z})^{\times} \to \mathbb{C}^{\times}
$ a Dirichlet character, respectively. As mentioned in \S 1, for simiplicity, we restrict ourselves to either of the following cases:
\if0
\begin{center}
(i) $N=1$, that is, $\chi$ is trivial;  \ (ii) $N>1$ is odd, $\chi$ is primitive and $\chi^2$ is locally non-trivial.
\end{center}
\fi
\begin{enumerate}
\item[{\rm (i)}] $N=1$, that is, $\chi$ is trivial; \vspace*{1mm}
\item[{\rm (ii)}] $N>1$ is odd, $\chi$ is primitive and 
$\chi^2$ is locally non-trivial at every prime $l \mid N$. 
\end{enumerate} 
Given a positive integer $n$, if $\kappa$ is an integer with 
$\kappa> n+1$ and $\chi(-1)=(-1)^{\kappa}$, 
then the ({\it holomorphic}) 
{\it Siegel Eisenstein series} 
of genus $n$, weight $\kappa$ and level $N$ with nebentypus character $\chi$ 
is defined as follows: 
For each $Z \in \mathfrak{H}_{n}$, put 
\begin{eqnarray*}
E_{\kappa,\,\chi}^{(n)}(Z)
&:=& 2^{-[(n+1)/2]
} L(1-\kappa,\, \chi) \prod_{i=1}^{[n/2]} L(1-2\kappa+2i,\, \chi^2) \\
&{}& \times \sum_{\gamma=\left[\begin{smallmatrix}
* & * \\
C & D
\end{smallmatrix}\right] \in (P_{2n}\, \cap\, \Gamma_0(N)^{(n)})
 \backslash \Gamma_0(N)^{(n)}
 } {\chi}^{-1}(\det D) \det(CZ+D)^{-\kappa},
\end{eqnarray*}
where $L(s,\, \psi)$ denotes Dirichlet's $L$-function associated with some character $\psi$, and 
$P_{2n}$ the Siegel parabolic subgroup of ${\rm Sp}(2n)$ consisting of all matrices $g = \left[\begin{smallmatrix}
* & * \\
0_n & *
\end{smallmatrix}\right]$, respectively. 
\if0 %
that is, 
${\rm P}_{2n}=\{
\left(\begin{smallmatrix}
* & * \\
0_g & *
\end{smallmatrix}\right) \in {\rm 
Sp}_{2n}
\}.$  
\fi %

\vspace*{3mm}
Let $r$ be a positive integer. 
For each rational prime $l$, let ${\rm Sym}_{r}^{*}(\mathbb{Z}_l)$ denote the set of all half-integral symmetric matrices of degree $r$ over $\mathbb{Z}_l$. 
Given a nondegenerate $S \in {\rm Sym}_r^{*}(\mathbb{Z}_l)$, 
we define a formal power series $b_l(S;\,X)$ in $X$ by  
\[
b_l(S;\,X):=
\sum_{R \in {\rm Sym}_{r}(\mathbb{Q}_l)/{\rm Sym}_{r}(\mathbb{Z}_l)} e_l({\rm tr}(SR)) 
X^{{\rm val}_l(\mu_R)},
\]
where $\mu_R=[\mathbb{Z}_l^{r} + \mathbb{Z}_l^{r} R : \mathbb{Z}_l^{r}]$. 
Put $\mathfrak{D}_S:=
2^{2[r/2]} \det S
$. 
We note that 
if $r$ is even, 
then $(-1)^{r/2}\mathfrak{D}_S \equiv 0 \text{ or }1 \pmod{4}$, and thus, we may decompose it 
into the form 
$$(-1)^{r/2}\mathfrak{D}_S = \mathfrak{d}_S\,\mathfrak{f}_S^2,$$ 
where $\mathfrak{d}_S$ is the fundamental discriminant 
of the quadratic field extension $\mathbb{Q}_l(\{(-1)^{r/2}\mathfrak{D}_S\}^{1/2})/\mathbb{Q}_l
$ and $\mathfrak{f}_S=\{(-1)^{r/2}\mathfrak{D}_S/\mathfrak{d}_S\}^{1/2} \in \mathbb{Z}_l$. 
Let $\xi_l : \mathbb{Q}_l^{\times} \to \{\pm 1,\,0\}$ denote the character defined by 
\[
\xi_l(x)
=\left\{
\begin{array}{cl}
1 & \text{if\, } {\mathbb{Q}_l(x^{1/2})}={\mathbb{Q}_l}, \\[0.75mm]
-1 & \text{if\, } {\mathbb{Q}_l(x^{1/2})}/{\mathbb{Q}_l} \text{ is unramified}, \\[0.75mm]
0 & \text{if\, } {\mathbb{Q}_l(x^{1/2})}/{\mathbb{Q}_l} \text{ is ramified}. 
\end{array}
\right.
\]
As shown in \cite{
Kit84,
Fei86, Shim94b}, for each nondegenerate $S \in {\rm Sym}_{r}^{*}(\mathbb{Z}_l)$, 
there exists a polynomial $F_l(S;\,X) \in \mathbb{Z}[X]$ whose constant term is $1$ 
such that $b_l(S;\,X)$ is decomposed as follows: 
\begin{equation}
b_l(S;\,X)
=
F_l(S;\,X)\times
\left\{
\begin{array}{ll}
\displaystyle{(1-X)
\prod_{i=1}^{r/2} (1-l^{2i} X^2) \over 1- 
\xi_l((-1)^{r/2}\det S
)\,
l^{r/2} X} & \textrm{ if $r$ is even}, \\[5mm]
(1-X)\prod_{i=1}^{(r-1)/2} (1-l^{2i} X^2) & \textrm{ if $r$ is odd}
\end{array}\right.
\end{equation}
(cf. Proposition 3.6 in \cite{Shim94b}). 
%
We note that 
$F_l(S;\,X)$ satisfies the functional equation 
\begin{equation}
F_l(S;\, l^{-r-1} X^{-1})=F_l(S;\, X) \times
\left\{
\begin{array}{ll}
(l^{r+1} X^2)^{- {\rm val}_l(\mathfrak{f}_S)}  & \textrm{ if $r$ is even}, \\[3mm]
\eta_l(S)(l^{(r+1)/2} X)^{- {\rm val}_l(\mathfrak{D}_S)}  & \textrm{ if $r$ is odd},
\end{array}
\right.
\end{equation}
where 
\[
\eta_l(S):=h_l(S)\, (\det S,\,(-1)^{(r-1)/2}\det S)_l\, (-1,\,-1)_l^{(r^2-1)/8}
 \]
in terms of the Hasse invariant $h_l(S)$ in the sense of Kitaoka \cite{Kit84}
and the Hilbert symbol $(*,*)_l$ defined over $\mathbb{Q}_l$ (cf. Theorem 3.2 in \cite{Kat99}). 
Thus, it turns out that $F_l(S;\,X)$ has degree $2{\rm val}_l(\mathfrak{f}_S)$ or ${\rm val}_l(\mathfrak{D}_S)$ 
according as $r$ is even or odd. 
We easily see that $F_l(uS;\, X)=F_l(S;\,X)$ for each $u \in \mathbb{Z}_l^{\times}$, 
and that if $S$, $T \in {\rm Sym}_r^*(\mathbb{Z}_l)$ are equivalent over $\mathbb{Z}_l$, that is, 
$T={}^t U S U$ for some $U \in {\rm GL}(r, \mathbb{Z}_l)$, then $F_l(S;\,X)=F_l(T;\,X)$. 
For further details on the above-mentioned issues, see \cite{Kat99}. 

\begin{lem}
Let $n$, $\kappa$, $N$ and $\chi$ be taken as above. 
\begin{itemize}
\item[(I)] $E_{\kappa,\,\chi}^{(n)}
\in \mathscr{M}_{\kappa}(\Gamma_0(N),\,\chi)^{(n)}$ and it is a 
Hecke eigenform, that is, a simultaneous eigenfunction of Hecke operators defined at least for all primes not dividing the level $N$. 

\item[(I\hspace{-.1em}I)] 
Let us consider a Fourier expansion of $E_{\kappa,\,\chi}^{(n)}$ taken of the form 
\[
E_{\kappa,\,\chi}^{(n)}
(Z)=\sum_{\scriptstyle 
T \in {\rm Sym}_n^*(\mathbb{Z}), \atop {\scriptstyle 
T \ge 0}} A_{\kappa,\,\chi}(T)\,{\bf q}^T. 
\]
Then every coefficient $A_{\kappa,\,\chi}(T)$, which is invariant under $T \mapsto {}^t UTU$
for $U \in {\rm GL}(n,\mathbb{Z})$, is described as follows: \vspace*{1mm}
\begin{itemize}
\item[(I\hspace{-.1em}I\,a)] For $T=0_n \in {\rm Sym}_{n}^{*}(\mathbb{Z})$
, we have  
\[
A_{\kappa,\,\chi}(T)=2^{-[(n+1)/2]
} L(1-\kappa,\,\chi) \prod_{i=1}^{[n/2]} L(1-2\kappa+2i,\,\chi^2). 
\]
Therefore $E_{\kappa,\,\chi}^{(n)}$ is not cuspidal. \medskip

\item[(I\hspace{-.1em}I\,b)]If 
$T\in {\rm Sym}_n^{*}(\mathbb{Z})$ is taken of the form 
\[
T=\left[\begin{array}{c|c}
T' & {} \\ \hline
{} & 0_{n-r}
\end{array}\right]
\]
for some nondegenerate $T' \in {\rm Sym}_r^{*}(\mathbb{Z})$ with $0 < r \le n$ \text{\rm (i.e., ${\rm rank}\,T={\rm rank}\,T'=r$)}, then 
\begin{minipage}[t]{0.9\textwidth}
\begin{eqnarray*}
{A_{\kappa,\,\chi}(T)} 
&=& 2^{[(r+1)/2]-[(n+1)/2]} \prod_{i=[r/2]+1}^{[n/2]} L(1-2\kappa+2i,\chi^2) 
 \\ 
&&\times 
\left\{
\begin{array}{ll}
L(1-\kappa +r/2,\left({\mathfrak{d}_{T'} \over *}\right)\chi) 
\displaystyle\prod_{l\,\mid\, \mathfrak{f}_{T'} }  
F_l(T';\, \chi(l)\,l^{\kappa-r-1}) & \textrm{ if $r$ is even}, \\
 \displaystyle\prod_{l\,\mid\, \mathfrak{D}_{T'} }  
F_l(T';\, \chi(l)\,l^{\kappa-r-1}) & \textrm{ if $r$ is odd},  
\end{array}\right. 
\nonumber 
\end{eqnarray*}
\end{minipage}
\medskip

where 
$\left({\,\mathfrak{d}\, \over *}\right)$ denotes 
the Kronecker symbol.  
\end{itemize}
\end{itemize}

\end{lem}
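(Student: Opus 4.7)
The plan is to follow the classical Shimura-style analysis of Siegel Eisenstein series (cf.\ \cite{Shim82, Shim94b}), combined with the Kitaoka-Shimura decomposition of local singular series already quoted in (3).

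For assertion (I), one first checks that, under $\kappa>n+1$, the defining sum converges absolutely and locally uniformly on $\mathfrak{H}_n$ (Hecke's classical convergence criterion, applied in the Siegel setting); thus $E_{\kappa,\chi}^{(n)}$ is a holomorphic function there. The invariance under $\Gamma_0(N)^{(n)}$ with nebentypus $\chi$ is a routine change of variables in the summation over $(P_{2n}\cap\Gamma_0(N)^{(n)})\backslash\Gamma_0(N)^{(n)}$: right-multiplying the coset index by $\gamma'\in\Gamma_0(N)^{(n)}$ permutes the cosets but contributes an extra factor $\chi(\det D_{\gamma'})$ from $\chi^{-1}(\det D)$. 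Koecher's principle (for $n\ge 2$; trivial for $n=1$) then upgrades the transformation law to membership in $\mathscr{M}_\kappa(\Gamma_0(N),\chi)^{(n)}$. The Hecke eigenform property at primes $\ell\nmid N$ is automatic: $E_{\kappa,\chi}^{(n)}$ is the spherical section of a degenerate principal series induced from the Siegel parabolic, whose Satake parameters are explicit, so it is a simultaneous eigenfunction of the spherical Hecke algebra at every unramified prime.

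For assertion (II), the invariance $A_{\kappa,\chi}(T)=A_{\kappa,\chi}({}^tUTU)$ for $U\in\mathrm{GL}(n,\mathbb{Z})$ follows from $\mathrm{diag}(U,{}^tU^{-1})\in\Gamma_0(N)^{(n)}$ together with $\chi(\det{}^tU^{-1})=1$. The core task is to unfold the defining series into its $T$-th Fourier coefficient. I parametrize $(P_{2n}\cap\Gamma_0(N)^{(n)})\backslash\Gamma_0(N)^{(n)}$ by classes of coprime symmetric pairs $(C,D)$ modulo $\mathrm{GL}(n,\mathbb{Z})$; the constant-term contribution $A_{\kappa,\chi}(0)$ comes entirely from the $C=0$ summand, which evaluates to $1$ and leaves only the pulled-out prefactor, giving (II\,a). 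For a positive-semidefinite $T$ of rank $r>0$, after reducing via $\mathrm{GL}(n,\mathbb{Z})$-equivalence to $T=\mathrm{diag}(T',0_{n-r})$ with $T'$ nondegenerate, the standard Siegel integral splits $A_{\kappa,\chi}(T)$ into an archimedean confluent-hypergeometric factor (which absorbs the powers of $\pi$, $(-2\pi\sqrt{-1})^{n\kappa}$ and the gamma quotient produced by the prefactor) times an Euler product of local singular series. At each finite prime $\ell$, the local factor is essentially $b_\ell(T';\chi(\ell)\ell^{\kappa-r-1})$ multiplied by a simple correction coming from the $(n-r)$ trailing zero block. Substituting the decomposition of $b_\ell$ from (3) converts this into $F_\ell(T';\chi(\ell)\ell^{\kappa-r-1})$ times an elementary rational function of $\chi(\ell)\ell^{\kappa-r-1}$ whose Euler product, combined with the pulled-out $L$-values in the prefactor, collapses to $L(1-\kappa+r/2,(\mathfrak{d}_{T'}/*)\chi)\prod_{i=[r/2]+1}^{[n/2]}L(1-2\kappa+2i,\chi^2)$ in the even case, or to $\prod_{i=[r/2]+1}^{[n/2]}L(1-2\kappa+2i,\chi^2)$ alone in the odd case. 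The degree count implicit in (4) forces $F_\ell\equiv 1$ at all primes $\ell\nmid\mathfrak{f}_{T'}$ (even $r$) or $\ell\nmid\mathfrak{D}_{T'}$ (odd $r$), so the finite product in the lemma is exactly what survives.

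The main obstacle is the careful bookkeeping of the normalization: matching the archimedean gamma factor, the powers of $2$ and $\pi$, and the $\det(Y)$-weights against the local density formulas so that everything cancels to yield precisely the stated expression. Most of these archimedean and non-archimedean computations have been carried out in \cite{Shim82, Shim94b, Kit84, Kat99}, so my task reduces to assembling them, verifying that under assumptions (i)--(ii) no unexpected degenerate contribution appears at ramified primes $\ell\mid N$ (which is precisely where the hypothesis that $\chi^2$ be locally non-trivial at each $\ell\mid N$ is used to kill potentially extra pole terms), and repackaging the answer in the form best suited to the subsequent $p$-stabilization construction in \S 4.
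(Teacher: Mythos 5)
Your overall architecture (unfold the coset sum, separate an archimedean confluent hypergeometric factor from an Euler product of local singular series, then feed in the Kitaoka--Shimura factorization (3) to isolate $F_\ell$) is the classical route and works verbatim in case (i), where $N=1$ and every finite place is unramified; there it reproduces Shimura's theorem. The paper instead works adelically, writing $E_{\kappa,\chi}^{(n)}$ as ${\bf E}(\varphi^{(s)})$ for a section of ${\rm Ind}_{P_{2n}(\mathbb{A})}^{G_{2n}(\mathbb{A})}(\bfchi\cdot|*|_{\mathbb{A}}^{s})$ and expressing $A_{\kappa,\chi}(T)$ as a product of local Whittaker functions ${\rm Wh}_T(\varphi_v^{(s)})$, citing Shimura for $v=\infty$ and for unramified finite $v$, and Takemori--Ikeda for the ramified places.

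The genuine gap in your argument is at the primes $\ell\mid N$ in case (ii). There the local factor produced by the unfolding is \emph{not} "essentially $b_\ell(T';\chi(\ell)\ell^{\kappa-r-1})$": since $\chi(\ell)=0$ for $\ell\mid N$, that expression degenerates, and what actually appears is a ramified (character-twisted) local singular series, i.e.\ a sum of the shape $\sum_R \chi_\ell(\cdots)\,e_\ell({\rm tr}(T'R))\,\ell^{-s\,{\rm val}_\ell(\mu_R)}$, which is not covered by the decomposition (3) or by the Kitaoka--Katsurada theory of $F_\ell$. The assertion implicit in (II\,b) --- that after normalization this ramified local factor is identically $1$, independent of $T'$ --- is exactly the substantive input supplied by Takemori \cite{Tak15} via Ikeda's functional equation \cite{Ike17} under the hypothesis that $\chi_\ell^2$ is non-trivial; it is not a bookkeeping "verification that no unexpected degenerate contribution appears." Your proposal as written asserts the conclusion at these places rather than proving it, so you must either import the ramified Whittaker computation of \cite{Tak15} explicitly or carry out the twisted local density evaluation yourself; the rest of the assembly (constant term from $C=0_n$, ${\rm GL}(n,\mathbb{Z})$-invariance, the degree count from (4) forcing $F_\ell=1$ off $\mathfrak{f}_{T'}$ resp.\ $\mathfrak{D}_{T'}$) is sound.
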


\begin{rem}
For the convenience in the sequel, we make the convention for $r=0$, that 
$\mathfrak{D}_{S}=\mathfrak{d}_{S}=\mathfrak{f}_{S}=1$, and 
$F_l(S;\,X)=1$ for all primes $l$. 
This enables us to regard (I\hspace{-.1em}I\,a) as (I\hspace{-.1em}I\,b) for $r=0$. 
\end{rem}

\begin{proof}
The assertion (I) is well-known. 
The assertion (I\hspace{-.1em}I) can be obtained by exploiting an idea of 
Shimura \cite{Shim94b} as follows: 
\if0
(i) Whenever $N=1$ (i.e., $\chi$ is trivial), the assertion (2) is Theorem 3.2 in \cite{Shim94b}. 
(ii) Whenever $N>1$ is odd, $\chi$ is primitive and $\chi^2$ is locally non-trivial, the assertion (2b) 
was also obtained by Takemori \cite{Tak15} in the following way:  
\fi 
Let $\mathbb{A}$ be the ring of adeles over $\mathbb{Q}$, $G_{2n}(\mathbb{A})={\rm Sp}(2n,\mathbb{A})$, 
$P_{2n}(\mathbb{A})=M_{2n}(\mathbb{A})N_{2n}(\mathbb{A})$ a Levi decomposition of the Siegel parabolic subgroup, where
\[
M_{2n} := \left\{ \left.
\left[\begin{array}{cc}
A & 0_n \\
0_n & {}^t\!A^{-1}
\end{array}\right] \ \right| A \in {\rm GL}(n) \right\}, \quad 
N_{2n}:=\left\{ \left.
\left[\begin{array}{cc}
1_n & B \\
0_n & 1_n
\end{array}\right] \ \right| {}^t B =B
\right\}, 
\]
and 
$\bfchi : {\mathbb{A}^{\times}/\mathbb{Q}^{\times}} \to \mathbb{C}^{\times}$ 
the unitary Hecke character corresponding to $\chi$, respectively. 
For each $s \in \mathbb{C}$, 
let 
${\rm Ind}_{P_{2n}(\mathbb{A})}^{G_{2n}(\mathbb{A})}(\bfchi \cdot |*|_{\mathbb{A}}^{s})$ denote the normalized smooth representation induced from the character 
of ${\rm GL}(n,\mathbb{A}) \simeq M_{2n}(\mathbb{A})$ defined by $A \mapsto \bfchi(\det A)\, |\det A|_{\mathbb{A}}^{s}$, 
where $|*|_{\mathbb{A}}$ denotes the 
norm on $\mathbb{A}$.  
Choosing a suitable section $\varphi^{(s)}
\in {\rm Ind}_{P_{2n}(\mathbb{A})}^{G_{2n}(\mathbb{A})}(\bfchi \cdot |*|_{\mathbb{A}}^{s})$, we define the Eisenstein series ${\bf E}
(\varphi^{(s)})
$ on $G_{2n}(\mathbb{A}
)$ by  
\[
{\bf E}
(\varphi^{(s)})(g):=\sum_{\gamma \in P_{2n}\backslash G_{2n}} 
\varphi^{(s)}(\gamma g),
\]
which converges absolutely for ${\rm Re}(s) \gg 0$, 
and 
${\bf E}
(\varphi^{(\kappa - (n+1)/2)})$
evaluates \[
2^{[(n+1)/2]}L(1-\kappa,\,\chi)^{-1}\prod_{i=1}^{[n/2]}L(1-2\kappa+2i,\,\chi^2)^{-1}
E_{\kappa,\,\chi}^{(n)}. \]
Thus, the desired equation can be obtained from an explicit formula for  
Fourier coefficients of ${\bf E}(\varphi^{(s)})$, more precisely, 
local Whittaker functions ${\rm Wh}_T(\varphi_v^{(s)})$ 
defined on $G_{2n}(\mathbb{Q}_v)$ 
for all places $v$ of $\mathbb{Q}$, 
where $\mathbb{A}=\prod_v \mathbb{Q}_v$, $\bfchi = \prod_v \bfchi_v$ and $\varphi^{(s)}=\prod_v \varphi_v^{(s)}$. 
Whenever $v$ is archimedean (i.e., $v=\infty$) or non-archimedean at which $\bfchi_v$ is unramified (i.e., $v$ is a prime $l$ not dividing $N$), 
it has been proved by Shimura (cf. Equations 4.34-35K in \cite{Shim82} and Proposition 7.2 in \cite{Shim94b}). 
Whenever $v$ is a non-archimedean place at which $\bfchi_v$ is ramified and $\bfchi_v^2$ is non-trivial, 
the local Whittaker function ${\rm Wh}_T(\varphi_v^{(s)})$ is described by Takemori \cite{Tak15} 
in which the key argument relies upon a functional equation of ${\rm Wh}_T(\varphi_v^{(s)})$ due to Ikeda \cite{Ike17} (generalizing \cite{Swe95}). 
\end{proof}

\section{generalized Atkin $U_p$-operator}

\vspace*{2mm}

In this section, we recall the theory of Atkin's $U_p$-operator and its generalization. For further details on the facts set out below, 
see, for instance, \cite{A-Z95,Boe05,Tay88}. 

\medskip

Let $p$ be a prime number and $N$ a positive integer, respectively. 
If $p \mid N$, the 
coset decomposition
\[
\Gamma_0(N)^{(1)} \left[\begin{array}{cc}
1 & 0 \\
0 & p 
\end{array}\right]\Gamma_0(N)^{(1)}
=\bigsqcup_{s=0}^{p-1} \Gamma_0(N)^{(1)}\left[\begin{array}{cc}
1 & s \\
0 & p 
\end{array}\right]
\]
induces the following linear operator $U_p=U_{p,1}$ 
on $\mathscr{M}_{\kappa}(\Gamma_0(N),\,\chi)^{(1)}$: 
For each $f \in \mathscr{M}_{\kappa}(\Gamma_0(N),\,\chi)^{(1)}
$, put
\[
(f\,\|_{\kappa} \, U_p)(z) := \sum_{s=0}^{p-1} \left(f\,\|_{\kappa}\left[\begin{array}{cc}
1 & s \\
0 & p 
\end{array}\right]\right)\!(z)
= p^{-1} \sum_{s=0}^{p-1} f\!\left({z+s \over p}\right) \in \mathscr{M}_{\kappa}(\Gamma_0(N),\,\chi)^{(1)}. 
\]
We easily see that it is written in terms of Fourier expansions as 
\begin{equation}
f(z)=\sum_{m=0}^{\infty} a(m) \,q^m \longmapsto (f\,\|_{\kappa} \, U_p)(z)=\sum_{m=0}^{\infty} a(pm) \,q^m 
\end{equation}
and this is still valid even if $p \nmid N$, 
however it maps from $\mathscr{M}_{\kappa}(\Gamma_0(N),\,\chi)^{(1)}$ to $\mathscr{M}_{\kappa}(\Gamma_0(Np),\,\chi)^{(1)}$ in this case. 
We refer to the operator $U_{p}$, regardless of whether $p \mid N$ or not, as Atkin's $U_p$-operator. 
\begin{rem}
Obviously, $U_p$ coincides with the usual Hecke operator $T_p$ if $p \mid N$. However, $U_p$ is slightly different from $T_p$ in general:    
\[
\Gamma_0(N)^{(1)} \left[\begin{array}{cc}
1 & 0 \\
0 & p 
\end{array}\right]\Gamma_0(N)^{(1)}
=\bigsqcup_{s=0}^{p-1} \Gamma_0(N)^{(1)}\left[\begin{array}{cc}
1 & s \\
0 & p 
\end{array}\right] \sqcup \Gamma_0(N)^{(1)}\left[\begin{array}{cc}
p & 0 \\
0 & 1 
\end{array}\right]
\]
if $p \nmid N$. 
\end{rem}
Similarly, if $n>1$ and $p \mid N$, the following $n$ double-coset operators at $p$ are relevant for Siegel modular forms of genus $n$ and level $N$: 
\[
U_{p,i}:=
\left\{\begin{array}{ll}
{\Gamma_0(N)^{(n)}\, 
{\rm diag}(\underbrace{1,\cdots,1}_{i},\underbrace{p,\cdots,p}_{n-i},
\underbrace{p^2,\cdots,p^2}_{i},\underbrace{p,\cdots,p}_{n-i})
\, \Gamma_0(N)^{(n)}} & \textrm{ if }1 \le i \le n-1, \\[8mm] 
{\Gamma_0(N)^{(n)}\, 
{\rm diag}(\underbrace{1,\cdots,1}_{n},\underbrace{p,\cdots,p}_{n})
\, \Gamma_0(N)^{(n)}} & \hspace*{-23mm}\textrm{ if }i=n.  
\end{array}\right. 
\]
We note that if $N$ is divisible by $p$, 
these operators $U_{p,1},\,\cdots,\, U_{p,n-1}$ and $U_{p,n}$ generate the dilating Hecke algebra 
at $p$ acting on $\mathscr{M}_{\kappa}(\Gamma_0(N),\,\chi)^{(n)}$. 
In particular, we are interested in the operator $U_{p,n}$ which plays a central role among them. 
Namely, we define the operator $U_{p,n}$ on $\mathbb{C}_p[[{\bf q}]]^{(n)}$ by 
\begin{equation}
F=
\sum_{T \ge 0} A_{F}(T) \,{\bf q}^T \longmapsto 
F\,\|_{\kappa}\, U_{p,n}=\sum_{T \ge 0} A_{F}(pT) \,{\bf q}^T. 
\end{equation}
Indeed, 
we easily see that if $F \in \mathscr{M}_{\kappa}(\Gamma_0(N),\,\chi)^{(n)}$ 
with some positive integers $\kappa$, $N$ and a Dirichlet character $\chi$, 
then 
\[
F \,\|_{\kappa}\, U_{p,n} \in \left\{
\begin{array}{ll}
\mathscr{M}_{\kappa}(\Gamma_0(N),\,\chi)^{(n)} & \textrm{ if }p \mid N, \\[2mm]
\mathscr{M}_{\kappa}(\Gamma_0(Np),\,\chi)^{(n)} & \textrm{ if }p \nmid N
\end{array}
\right. \vspace*{2mm}
\]
and the action of $U_{p,n}$ commutes with those of the Hecke operators defined outside $Np$. 

\section{Semi-ordinary $p$-stabilization of Siegel Eisenstein series}


Let us fix an odd integer $M$ and 
a prime number $p \nmid M$ (including $p=2$) once for all. 
In this section, for a pair of positive integers $n$, $\kappa$ and a Dirichlet character $\chi$ modulo $M$ taken as in \S 2, that is,  
\[
\kappa > n+1, \quad \chi(-1) = (-1)^{\kappa} \text{ \ and \  $\chi^2$ is locally non-trivial at every prime $l \mid M$ if $M>1$},   
\]
we introduce a certain $p$-stabilization of the Siegel Eisenstein series $E_{\kappa,\,\chi}^{(n)}\in \mathscr{M}_{\kappa}(\Gamma_0(M),\,\chi)$
which can be viewed as a natural generalization of the ordinary $p$-stabilization 
$E_{\kappa,\,\chi}^{(1)} \mapsto (E_{\kappa,\,\chi}^{(1)})^*$ (cf. Equations (1) and (1')). Moreover, 
we derive an explicit form of the associated Fourier expansion 
\[
(E_{\kappa,\,\chi}^{(n)})^{*}(Z)=\sum_{\scriptstyle 
T \in {\rm Sym}_n^*(\mathbb{Z}), \atop {\scriptstyle 
T \ge 0}} A_{\kappa,\,\chi}^{*}(T)\,{\bf q}^T, 
\]
which is expressed in a similar fashion to (2), and conclude its $p$-adic interpolation problem. \\

To begin with, we introduce 
the following two polynomials in $X$ and $Y$: 
\begin{eqnarray}
\mathcal{P}_{p}^{(n)}(X,\,Y) 
&:=&(1-p^{n}XY)\prod_{i=1}^{[n/2]}(1-p^{2n-2i+1}X^2Y), \nonumber \\
\mathcal{R}_{p}^{(n)}(X,\,Y) 
&:=&\prod_{j=1}^{n} (1- p^{j(2n-j+1)/2}X^j Y). \nonumber 
\end{eqnarray}
In addition, let us denote by $\widetilde{\mathcal{R}}_{p}^{(n)}(X,\,Y)$
the reflected polynomial of $\mathcal{R}_{p}^{(n)}(X,\,Y)$ with respect to $Y$, that is,  
\begin{equation}
\widetilde{\mathcal{R}}_{p}^{(n)}(X,\,Y) := Y^{n}\, \mathcal{R}_{p}^{(n)}(X,\,Y^{-1}) = \prod_{j=1}^{n} (Y- p^{j(2n-j+1)/2}X^j). 
\end{equation}

\begin{rem}
Whenever $n=1$ and $2$, a straightforward calculation yields 
\[\left\{
\begin{array}{l}
\mathcal{P}_p^{(1)}(X,\,Y)=\mathcal{R}_p^{(1)}(X,\,Y)=1-pXY, \\[2mm]
\mathcal{P}_p^{(2)}(X,\,Y)=\mathcal{R}_p^{(2)}(X,\,Y)=(1-p^2XY)(1-p^3X^2Y). 
\end{array}\right.\]
We note that if $n >2$, then $\mathcal{P}_{p}^{(n)}(X,\,Y)\ne\mathcal{R}_{p}^{(n)}(X,\,Y)$,
however, $\mathcal{P}_{p}^{(n)}(X,\,1)$ divides $\mathcal{R}_{p}^{(n)}(X,\,1)$ in general. 
For readers' convenience, we reveal the origins of these two polynomials here:  
Obviously, the former 
$\mathcal{P}_{p}^{(n)}(X,\,Y)$ is relevant to 
the local factor of the Fourier coefficient 
$A_{\kappa,\,\chi}(0_n)$ described in Lemma 2.1\,(I\hspace{-.1em}I\,a) at $p$:
\begin{equation}
\mathcal{P}_{p}^{(n)}(\chi(p)\,p^{\kappa-n-1},\,1)=(1-\chi(p) \,p^{\kappa-1})\prod_{i=1}^{[n/2]}(1-\chi^2(p) \,p^{
2\kappa-2i-1}). 
\end{equation}
The latter $\mathcal{R}_{p}^{(n)}(X,\,Y)$ has been introduced by Kitaoka \cite{Kit86} and B\"ocherer-Sato \cite{B-S87} to describe the denominator of the formal power series 
$
\sum_{m=0}^{\infty} F_p(p^mS;\,X)Y^m 
$ for each nondegenerate $S \in {\rm Sym}_n^{*}(\mathbb{Z}_p)$ (cf. Equation (11) below). 
\end{rem}

\vspace*{2mm}
Now, we introduce a $p$-stabilization of the Siegel Eisenstein series $E_{\kappa,\,\chi}^{(n)}\in \mathscr{M}_{\kappa}(\Gamma_0(M),\,\chi)^{(n)}$ 
in terms of a linear combination of 
$(U_{p,n})^i = \underbrace{U_{p,n} \circ \cdots \circ U_{p,n}}_{i}$ for $i=0,1,\cdots, n$ 
as follows: 

\begin{thm}
For a pair of $n$, $\kappa$ and $\chi$ taken as above, 
put 
\begin{equation}
(E_{\kappa,\,\chi}^{(n)})^*:=
{\mathcal{P}_{p}^{(n)}(\chi(p)\,p^{\kappa-n-1},\,1) \over \mathcal{R}_{p}^{(n)}(\chi(p)\,p^{\kappa-n-1},\,1)}
\cdot E_{\kappa,\,\chi}^{(n)}\,\|_{\kappa}\,\widetilde{\mathcal{R}}_{p}^{(n)}(\chi(p)\,p^{\kappa-n-1},\,U_{p,n}). 
\end{equation}
Then we have 
\smallskip

\begin{enumerate}
\item[{\rm (I)}] $(E_{\kappa,\,\chi}^{(n)})^* \in \mathscr{M}_{\kappa}(\Gamma_0(Mp),\,\chi)^{(n)}$ and it is a 
Hecke eigenform such that 
all the eigenvalues outside $Mp$ agree with those of $E_{\kappa,\,\chi}^{(n)}\in \mathscr{M}_{\kappa}(\Gamma_0(M),\,\chi)^{(n)}$. 
\if0
Namely, we have 
\[
L^{\{Mp\}}(s,\, (E_{\kappa,\,\chi}^{(n)})^*,\,{\rm spin})=L^{\{Mp\}}(s,\,E_{\kappa,\,\chi}^{(n)},\, {\rm spin}),
\]
where $L^{\{Mp\}}(s,\, E_{\kappa,\,\chi}^{(n)},\,{\rm spin})$ 
and 
$L^{\{Mp\}}(s,\, (E_{\kappa,\,\chi}^{(n)})^*,\,{\rm spin})$
both denote the associated spinor $L$-functions 
with the Euler factors corresponding to primes dividing $Mp$ removed. 
 \\[1mm]
\fi

\medskip

\item[{\rm (I\hspace{-.1em}I)}]If $0 \le T \in {\rm Sym}_{n}^{*}(\mathbb{Z})$ is 
taken of the form 
\[
T=
\left[\begin{array}{c|c}
T' & {} \\
\hline
{} & 0_{n-r}
\end{array}
\right]
\] 
for some nondegenerate $T' \in {\rm Sym}_r^{*}(\mathbb{Z})$ with $0 \le r \le n$,  
then the $T$-th Fourier coefficient of $(E_{\kappa,\,\chi}^{(n)})^*$ 
is taken of the following form: \vspace*{-2mm}
\begin{center}
\begin{minipage}[t]{0.9\textwidth}  
\begin{eqnarray*}
A_{\kappa,\,\chi}^{*}(T)
&=& 2^{[(r+1)/2]-[(n+1)/2]} \prod_{i=[r/2]+1}^{[n/2]} L^{\{p\}}(1-2\kappa+2i,\chi^2) \nonumber \\ 
&& \times
\left\{
\begin{array}{ll}
L^{\{p\}}(1-\kappa+r/2, \, \left({\mathfrak{d}_{T'} \over *}\right)\chi) 
\displaystyle\prod_{\scriptstyle \,\,l\, \mid\, \mathfrak{f}_{T'}, \atop {\scriptstyle l \ne p}} 
F_l(T';\, 
\chi(l)\,l^{\kappa-r-1}) & \textrm{ if $r$ is even}, \\[5mm] 
\displaystyle\prod_{\scriptstyle \,\,l\, \mid\, \mathfrak{D}_{T'}, \atop {\scriptstyle l \ne p}} 
F_l(T';\, 
\chi(l)\,l^{\kappa-r-1}) & \textrm{ if $r$ is odd}. 
\end{array}\right. \nonumber
\end{eqnarray*}
\end{minipage}
\end{center}
\medskip

\noindent
Therefore we have  
$
(E_{\kappa,\,\chi}^{(n)})^*\,\|_{\kappa}\,U_{p,n} = (E_{\kappa,\,\chi}^{(n)})^*. 
$
\end{enumerate}
\end{thm}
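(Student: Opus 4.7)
Part (I) is essentially formal. Since $\widetilde{\mathcal{R}}_p^{(n)}(\chi(p)\,p^{\kappa-n-1},\,U_{p,n})$ is a $\mathbb{C}$-linear combination of powers of $U_{p,n}$ and, by \S 3, $U_{p,n}$ maps $\mathscr{M}_{\kappa}(\Gamma_0(M),\,\chi)^{(n)}$ into $\mathscr{M}_{\kappa}(\Gamma_0(Mp),\,\chi)^{(n)}$ (because $p \nmid M$) and commutes with all Hecke operators of level prime to $Mp$, Lemma 2.1(I) immediately yields the modularity, the Hecke-eigen\-form property, and the claimed matching of eigenvalues outside $Mp$.

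The substance is part (II). Put $X_0 := \chi(p)\,p^{\kappa-n-1}$ and expand $\widetilde{\mathcal{R}}_p^{(n)}(X_0,\,Y) = \sum_{i=0}^n c_i\,Y^i$. By (6), the $T$-th Fourier coefficient of $(E_{\kappa,\,\chi}^{(n)})^{*}$ is
\[
A_{\kappa,\,\chi}^{*}(T) \;=\; \frac{\mathcal{P}_{p}^{(n)}(X_0,\,1)}{\mathcal{R}_{p}^{(n)}(X_0,\,1)}\,\sum_{i=0}^n c_i\,A_{\kappa,\,\chi}(p^i T).
\]
Substituting Lemma 2.1(II) for each $A_{\kappa,\,\chi}(p^i T)$, I would isolate the $i$-dependent parts: (a) the front factors $L(1-2\kappa+2j,\chi^2)$ for $[r/2]{+}1 \le j \le [n/2]$ are independent of $i$; (b) for $r$ even, $(-1)^{r/2}\det(p^i T')$ and $(-1)^{r/2}\det T'$ differ by $p^{ri}$, which is a square in $\mathbb{Q}_p^{\times}$ since $ri$ is even, hence $\mathfrak{d}_{p^i T'} = \mathfrak{d}_{T'}$ and $L\!\left(1-\kappa+r/2,\,\left(\tfrac{\mathfrak{d}_{T'}}{\ast}\right)\!\chi\right)$ is independent of $i$; (c) $F_l(p^i T';\,X) = F_l(T';\,X)$ whenever $l\ne p$. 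Thus the entire computation reduces to evaluating
\[
\Xi(T') \;:=\; \frac{\mathcal{P}_{p}^{(n)}(X_0,\,1)}{\mathcal{R}_{p}^{(n)}(X_0,\,1)}\,\sum_{i=0}^n c_i\,F_p(p^i T';\,X_0'), \qquad X_0' := \chi(p)\,p^{\kappa-r-1},
\]
and showing that $\Xi(T')$ equals the product of the $p$-Euler factors that convert each $L$-value appearing in Lemma 2.1(II,b) into its $L^{\{p\}}$-counterpart, while cancelling the local factor $F_p(T';\,X_0')$ altogether.

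The key algebraic input is the factorization
\[
\mathcal{R}_p^{(n)}(X_0,\,Z) \;=\; \mathcal{R}_p^{(r)}(X_0',\,Z)\,\cdot\,\prod_{j=r+1}^n\!\bigl(1 - \chi(p)^j\, p^{j(2\kappa-j-1)/2}\,Z\bigr),
\]
which is verified via the elementary identity $j(2n-j+1)/2 + j(\kappa-n-1) = j(2\kappa-j-1)/2$, independent of $n$; reflecting in $Z$ yields the companion factorization $\widetilde{\mathcal{R}}_p^{(n)}(X_0,Z) = \widetilde{\mathcal{R}}_p^{(r)}(X_0',Z)\,\widetilde{E}_{n,r}(Z)$. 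Combined with the Kitaoka--B\"ocherer--Sato generating-function identity (Eq.~(11) below in the paper)
\[
\mathcal{R}_p^{(r)}(X_0',\,Z)\,\sum_{m \ge 0} F_p(p^m T';\,X_0')\,Z^m \;=\; G_{T'}(Z) \;\in\; \mathbb{Z}[Z]
\]
for nondegenerate $T'$ of size $r$, this identifies $\sum_i c_i F_p(p^i T';\,X_0')$ with an explicit coefficient of the polynomial $G_{T'}(Z)\,\widetilde{E}_{n,r}(Z)/\widetilde{\mathcal{R}}_p^{(r)}(X_0',Z)$ read after the obvious cancellation; multiplied by $\mathcal{P}_p^{(n)}(X_0,1)/\mathcal{R}_p^{(n)}(X_0,1)$, the surviving factors $\prod_{j=r+1}^n(1-\chi(p)^j p^{j(2\kappa-j-1)/2})$ assemble into exactly the $p$-Euler factors of the missing $L$-values at rank $r$. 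As a consistency check, the case $r=0$ (where $T'$ is empty and $F_p \equiv 1$ by Remark 2.2) collapses to $\Xi(\emptyset) = \mathcal{P}_p^{(n)}(X_0,1)$, reproducing the $r=0$ entry of (II) via Remark 4.1 and Lemma 2.1(II,a).

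The hard part will be this last combinatorial identity: aligning $\widetilde{\mathcal{R}}_p^{(n)}(X_0,Z)$ (a polynomial of degree $n$) with the Kitaoka--B\"ocherer--Sato generating function (whose natural denominator lives at level $r$, not $n$), and then matching the residue against the correct $p$-Euler factors of $L(1-2\kappa+2j,\chi^2)$ and $L(1-\kappa+r/2,(\mathfrak{d}_{T'}/\ast)\chi)$ separately in the even and odd $r$ cases; the functional equation (4) for $F_p$ at $p$ is what keeps this bookkeeping symmetric and is where I expect most of the work to lie. Once the Fourier-coefficient formula is established, the final assertion $(E_{\kappa,\,\chi}^{(n)})^{*} \|_{\kappa} U_{p,n} = (E_{\kappa,\,\chi}^{(n)})^{*}$ is automatic: the resulting $A_{\kappa,\,\chi}^{*}(T)$ depends on $T$ only through the rank $r$, the fundamental discriminant $\mathfrak{d}_{T'}$ (when $r$ is even), and the polynomials $F_l(T';\cdot)$ for $l\ne p$, all of which are manifestly invariant under $T \mapsto pT$.
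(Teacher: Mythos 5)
Your reduction is the same as the paper's: part (I) is formal; for part (II) you expand $\widetilde{\mathcal{R}}_p^{(n)}(X_0,Y)=\sum_i c_i Y^i$, observe that the prime-to-$p$ data and the discriminant $\mathfrak{d}_{T'}$ are unchanged under $T'\mapsto p^iT'$, use the factorization $\mathcal{R}_p^{(n)}(X_0,Y)=\mathcal{R}_p^{(r)}(X_0',Y)\prod_{j=r+1}^n(1-\chi(p)^jp^{j(2\kappa-j-1)/2}Y)$ (your exponent computation is correct), and reduce everything to the single local quantity $\sum_i c_i\,F_p(p^iT';X_0')$, which via the Kitaoka--B\"ocherer--Sato identity (11) is $\mathcal{S}_p(T';X_0',1)$. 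All of this matches the paper's argument step for step.

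The gap is that you stop exactly where the theorem's real content begins. What must be proved is the closed-form evaluation (the paper's Equation (12))
\[
\mathcal{S}_p(T';X,1)=\frac{\mathcal{R}_p^{(r)}(X,1)}{\mathcal{P}_p^{(r)}(X,1)}\times
\begin{cases}
1-\xi_p\bigl((-1)^{r/2}\det T'\bigr)\,p^{r/2}X & \text{($r$ even}),\\[1mm]
1 & \text{($r$ odd}),
\end{cases}
\]
and this is \emph{not} a formal consequence of the generating-function identity or of a residue/coefficient extraction: identity (11) only tells you that $\mathcal{S}_p(T';X,Y)$ is \emph{some} polynomial, and a priori its value at $Y=1$ depends on the full local structure of $T'$, not merely on $r$ and $\xi_p((-1)^{r/2}\det T')$. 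You also point to the functional equation (4) as the place where the work lies, but that is not the tool the paper uses (and it is hard to see how it alone would yield the evaluation at $Y=1$). The paper instead proves (12) by direct computation in ranks $r=1$ and $r=2$ from the explicit formulas for $F_p$, and then by induction on $r$ using Katsurada's reduction formulas (Theorems 4.1 and 4.2 of \cite{Kat99}), which express $\mathcal{S}_p(T;X,1)$ for $T\sim T_1\oplus T_2$ (with $T_1$ of size $2$) in terms of $\mathcal{S}_p(T_2;p^2X,1)$ with an explicit elementary cofactor. Without this input (or an equivalent explicit fact about local Siegel series), your plan does not close; the bookkeeping you describe as "the hard part" is precisely the unproved identity.
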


The preceding theorem totally 
insists that the assignment $E_{\kappa,\,\chi}^{(n)} \mapsto (E_{\kappa,\,\chi}^{(n)})^{*}$ can be regarded as a $p$-stabilization 
which generalizes the ordinary $p$-stabilization of $E_{\kappa,\,\chi}=E_{\kappa,\,\chi}^{(1)}$ explained in \S 1. 
Whenever $n=2$, Skinner-Urban \cite{S-U06} has already dealt with a similar type of $p$-stabilization 
for some Siegel modular forms of genus $2$ so that the associated eigenvalue of $U_{p,2}$ is a $p$-adic unit as well. 
Accordingly, in the same context, we may 
call $(E_{\kappa,\,\chi}^{(n)})^{*}$ 
{\it semi-ordinary} at $p$ if $n \ge 2$. 
\begin{rem}
It should be mentioned that if $n >1$, $(E_{\kappa,\,\chi}^{(n)})^*$ may {\it not}\, satisfy the ordinary condition at $p$ in the sense of Hida (cf. \cite{Hid02,Hid04}). 
This disparity is inevitable at least for Siegel Eisenstein series of higher genus in general. 
For instance, in the case where $M=1$, because of the shape of associated Satake parameters at $p$, 
there is no way to produce from $E_{\kappa}^{(n)}=E_{\kappa,\,{\rm triv}}^{(n)} \in \mathscr{M}_{\kappa}(\Gamma_0(1))^{(n)}$ to 
a Hecke eigenform of level $p$ such that the associated eigenvalues of $U_{p,1},\cdots,\,U_{p,n-1}$ and $U_{p,n}$ are $p$-adic units simultaneously. 
However, 
as mentioned in \cite{S-U06}, it turns out that the semi-ordinary condition concerning only on the eigenvalue of $U_{p,n}$ 
is sufficient to adapt Hida's ordinary theory with some modification. (See also \cite{Pi11, B-P-S16}.)
\if0
\fi
\end{rem}

\vspace*{2mm}
\begin{proof}[Proof of Theorem 4.2]
The assertion (I) is obvious from Lemma 2.1\,(I) and the properties of $U_{p,n}$ explained in \S 3. 
\if0 
The proof of the assertion (I\hspace{-.1em}I) proceeds in a similar way as the one of Theorem 5.1 (and also Theorem 4.1) in \cite{Kaw10} obtained in the case where $n$ is even. Here is a complete proof. 
\fi 
Since $U_{p,n}$ does not effect on the Fourier coefficient $A_{\kappa,\,\chi}(0_n)$,  
the assertion (I\hspace{-.1em}I) for $r=0$ follows immediately from Lemma 2.1\,(I\hspace{-.1em}I\,a), Equations (8) and (9). 
\if0
\[
\mathcal{P}_{p}^{(n)}(\chi(p)\,p^{\kappa-n-1},\,1)=(1-\chi(p)\,p^{\kappa-1}) \prod_{i=1}^{[n/2]} (1-\chi^2(p)\,p^{2\kappa-2i-1}). 
\]
\fi
Hereinafter, we suppose that $r >0$. It follows by the definition of $\widetilde{\mathcal{R}}_{p}^{(n)}(X,Y)$ (cf. Equation (7)) that
\begin{eqnarray*}
\widetilde{\mathcal{R}}_{p}^{(n)}(X,Y)
&=& 
\sum_{m=0}^{n} (-1)^m 
s_m\!\left(\{ p^{j(2n-j+1)/2} X^j \,|\, 1 \le j \le n\}\right) Y^{n-m}, 
\end{eqnarray*} 
where 
$s_m(\{X_1,\cdots,X_n\})$ denotes  
the $m$-th elementary symmetric polynomial in $X_1,\,\cdots,\,X_n$. 
Thus, by Lemma 2.1\,(I\hspace{-.1em}I\,b) and Equation (9), we have 
\begin{eqnarray*}
\lefteqn{
A_{\kappa,\,\chi}^{*}(T)
={\mathcal{P}_{p}^{(n)}(\chi(p)\,p^{\kappa-n-1},\,1) \over 
\mathcal{R}_{p}^{(n)}(\chi(p)\,p^{\kappa-n-1},\,1)} \cdot \,
2^{[(r+1)/2]-[(n+1)/2]}
\prod_{i=[r/2]+1}^{[n/2]} L(1-2\kappa+2i,\,\chi^2) }
 \\
&\times& \sum_{m=0}^{n} (-1)^m 
s_m\!\left(\{\chi^j(p)\,p^{j(2n-j+1)/2+j(\kappa-n-1)} \,|\, 1 \le j \le n\}\right) 
F_p(p^{n-m}T';\, \chi(p)\,p^{\kappa-r-1}) \\
&& \times 
\left\{
\begin{array}{ll}
L(1-\kappa+r/2, \, \left({\mathfrak{d}_{T'} \over *}\right)\chi) 
\displaystyle\prod_{\scriptstyle \,l\, \mid \, \mathfrak{f}_{T'}, \atop {\scriptstyle l \ne p}} 
F_l(T';\,\chi(l)\,l^{\kappa-r-1}) & \textrm{ if $r$ is even}, \\[5mm]
\displaystyle\prod_{\scriptstyle \,l\, \mid \, \mathfrak{D}_{T'}, \atop {\scriptstyle l \ne p}} 
F_l(T';\,\chi(l)\,l^{\kappa-r-1}) & \textrm{ if $r$ is odd}. 
\end{array}
\right. 
\end{eqnarray*}
Here we note that 
\[\left\{
\begin{array}{l}
\mathcal{R}_{p}^{(n)}(X,\,Y)= \mathcal{R}_{p}^{(r)}(p^{n-r}X,\,Y) 
\displaystyle\prod_{j=r+1}^{n} (1- p^{j(2n-j+1)/2}X^j Y), \\[5mm]
\mathcal{P}_{p}^{(n)}(X,\,1)= \mathcal{P}_{p}^{(r)}(p^{n-r}X,\,1) 
\displaystyle\prod_{i=[r/2]+1}^{[n/2]}(1-p^{2n-2i+1}X^2).
\end{array}\right.\]
Thus, to prove the assertion (I\hspace{-.1em}I), it suffices to show that 
the following equation holds valid for each 
nondegenerate $T' \in {\rm Sym}_{r}^{*}(\mathbb{Z}_p)$ with
$0 < r \le n$: 
\begin{eqnarray}
\lefteqn{\sum_{m=0}^{r} (-1)^m  
s_m\!\left(
\{p^{j(2r-j+1)/2} X^j \,|\, 1 \le j \le r\}
\right) F_p(p^{r-m}T';\,X)} \\
&=& 
\displaystyle{\mathcal{R}_{p}^{(r)}(X,\,1) \over \mathcal{P}_{p}^{(r)}(X,\,1)} \cdot 
\left\{
\begin{array}{ll}
\left(1 - 
\xi_p((-1)^{r/2}\det T'
)\, p^{r/2} X\right) 
 & \textrm{ if $r$ is even}, \\[5mm]
1 & \textrm{ if $r$ is odd}. 
\end{array}
\right. 
 \nonumber
\end{eqnarray}
(Indeed, the preceding equation (10) yields 
\if0
the desired assertion follows from the equation (12) immediately since 
its right-hand side is invariant under $T'\mapsto pT'$ regardless of the parity of $r$. 
\fi
\begin{eqnarray*}
\lefteqn{
{\mathcal{P}_{p}^{(n)}(X,\,1) \over 
\mathcal{R}_{p}^{(n)}(X,\,1)}
 \sum_{m=0}^{n} (-1)^m 
s_m\!\left(\{p^{j(2n-j+1)/2}X^j \,|\, 1 \le j \le n\}\right) 
F_p(p^{n-m}T';\, p^{n-r}X)
} \\
&=& 
\if0
{\mathcal{P}_{p}^{(n)}(X,\,1) \over 
\mathcal{P}_{p}^{(r)}(p^{n-r}X,\,1)} 
\fi 
\displaystyle\prod_{i=[r/2]+1}^{[n/2]}(1-p^{2n-2i+1}X^2) \times 
\left\{
\begin{array}{ll}
\left(1 - 
\xi_p((-1)^{r/2}\det T'
)\, p^{n-r/2} X\right) & \textrm{ if $r$ is even}, \\[5mm]
1 & \textrm{ if $r$ is odd}, 
\end{array}
\right. 
\end{eqnarray*}
and hence, by evaluating this at $X=\chi(p)\,p^{\kappa-n-1}$, we obtain the desired equation. )
On the other hand,  
Theorem 1 in \cite{Kit86} (resp. Theorem 6 in \cite{B-S87}) states that 
for each nondegenerate $T' \in {\rm Sym}_{r}^{*}(\mathbb{Z}_p)$, 
the equation 
\begin{equation}
\sum_{m=0}^{\infty} F_p(p^m T';\,X)Y^m 
={\mathcal{S}_p
(T';\,X,\,Y)\over (1-Y)\mathcal{R}_{p}^{(r)}(X,\,Y)}
\end{equation}
holds for some polynomial $\mathcal{S}_p(T';\,X,\,Y) \in \mathbb{Z}[X,\,Y]$ if $p\ne 2$ (resp. $p=2$).  
Since the preceding equation yields 
\[
\sum_{m=0}^{r} (-1)^m  
s_m\!\left(
\{p^{j(2r-j+1)/2} X^j \,|\, 1 \le j \le r\}
\right) F_p(p^{r-m}T';\,X)=\mathcal{S}_p(T';\,X,\,1), 
\]
we may interpret Equation (10) as  
\begin{equation}
\mathcal{S}_p
(T';\,X,\,1)=
\displaystyle{\mathcal{R}_p^{(r)}(X,\,1) \over \mathcal{P}_p^{(r)}(X,\,1)} \times 
\left\{
\begin{array}{ll}
\left(1 - 
\xi_p((-1)^{r/2}\det T'
)\, p^{r/2} X\right) 
 & \textrm{ if $r$ is even}, \\[5mm]
1 & \textrm{ if $r$ is odd}. 
\end{array}
\right. 
\end{equation}
Whenever $r=1$, we easily see that  
$F_p(t;\,X)=\sum_{i=0}^{{\rm val}_p(t)} (pX)^i$ for each $t \in \mathbb{Z}_p \smallsetminus \{0\}$. Thus, we have 
\[
F_p(pt;\,X)-pX F_p(t;\,X)=1={\mathcal{R}_p^{(1)}(X,1) \over \mathcal{P}_p^{(1)}(X,1)} \quad {\rm (cf. \ Remark\ 4.1)}. 
\]
Whenever $r >1$, 
for a given $T$, let $\mathfrak{i}(T)$ denote the least integer $m$ such that $p^m T^{-1} \in {\rm Sym}_{r}^{*}(\mathbb{Z}_p)$. 
It is known that if $r=2$, then for each nondegenerate $T \in {\rm Sym}_2^{*}(\mathbb{Z}_p)$, 
the polynomial $F_p(T;\,X)$ admits the explicit form 
\begin{eqnarray*}
F_p(T;\,X) = \sum_{i=0}^{\mathfrak{i}(T)} (p^2 X)^i \left\{ 
\sum_{j=0}^{{\rm val}_p(\mathfrak{f}_T) - i} (p^3 X^2)^j
-\xi_p(
-\det T)\,pX 
\sum_{j=0}^{{\rm val}_p(\mathfrak{f}_T) - i-1} (p^3 X^2)^j \right\} 
\end{eqnarray*}
(cf. \cite{
Kat99}). Thus 
a simple calculation yields that 
\begin{eqnarray*}
\lefteqn{
F_p(p^2 T;\, X)
-(p^2 X+p^3 X^2) F_p(p T;\,X) 
+p^5 X^3 F_p(T;\,X)
} \\[1mm]
&=& 1 - \xi_p(-\det T)\, p X
= {\mathcal{R}_p^{(2)}(X,1)
\over 
\mathcal{P}_p^{(2)}(X,1)
} \times
\left(1 - \xi_p(-\det T)\, p X\right),
\end{eqnarray*}
\if0
\begin{eqnarray*}
\lefteqn{
F_p(p^2 T;\, X)
-(p^2 X+p^3 X^2) F_p(p T;\,X) 
+p^5 X^3 F_p(T;\,X) } \\[2mm]
&=& \sum_{i=0}^{\mathfrak{i}(T)+2} (p^2 X)^i 
\left\{ \sum_{j=0}^{v_p(\mathfrak{f}_T) - i+2} (p^3 X^2)^j -\chi_p(\mathfrak{d}_T
)\,pX 
\sum_{j=0}^{v_p(\mathfrak{f}_T) - i+1} (p^3 X^2)^j \right\} \\
&& \hspace*{5mm} -\sum_{i=0}^{\mathfrak{i}(T)+1} (p^2 X)^{i+1} 
\left\{ \sum_{j=0}^{v_p(\mathfrak{f}_T) - i+1} (p^3 X^2)^j
-\chi_p(\mathfrak{d}_T)\,pX 
\sum_{j=0}^{v_p(\mathfrak{f}_T) - i} (p^3 X^2)^j \right\} \\
&& - \sum_{i=0}^{\mathfrak{i}(T)+1} (p^2 X)^i 
\left\{ \sum_{j=0}^{v_p(\mathfrak{f}_T) - i+1} (p^3 X^2)^{j+1}
-\chi_p(\mathfrak{d}_T)\,pX 
\sum_{j=0}^{v_p(\mathfrak{f}_T) - i} (p^3 X^2)^{j+1} \right\} \\
&& \hspace*{5mm} + \sum_{i=0}^{\mathfrak{i}(T)} (p^2 X)^{i+1} 
\left\{ \sum_{j=0}^{v_p(\mathfrak{f}_T) - i} (p^3 X^2)^{j+1}
-\chi_p(\mathfrak{d}_T)\,pX 
\sum_{j=0}^{v_p(\mathfrak{f}_T) - i-1} (p^3 X^2)^{j+1} \right\} \\
&=& \sum_{i=0}^{\mathfrak{i}(T)+2} (p^2 X)^i 
\left\{ \sum_{j=0}^{v_p(\mathfrak{f}_T) - i+2} (p^3 X^2)^j
-\chi_p(\mathfrak{d}_T)\,pX 
\sum_{j=0}^{v_p(\mathfrak{f}_T) - i+1} (p^3 X^2)^j \right\} \\
&& \hspace*{5mm} -\sum_{i=1}^{\mathfrak{i}(T)+2} (p^2 X)^{i} 
\left\{ \sum_{j=0}^{v_p(\mathfrak{f}_T) - i+2} (p^3 X^2)^j
-\chi_p(\mathfrak{d}_T)\,pX 
\sum_{j=0}^{v_p(\mathfrak{f}_T) - i+1} (p^3 X^2)^j \right\} \\
&& - \sum_{i=0}^{\mathfrak{i}(T)+1} (p^2 X)^i 
\left\{ \sum_{j=0}^{v_p(\mathfrak{f}_T) - i+1} (p^3 X^2)^{j+1}
-\chi_p(\mathfrak{d}_T)\,pX 
\sum_{j=0}^{v_p(\mathfrak{f}_T) - i} (p^3 X^2)^{j+1} \right\} \\
&& \hspace*{5mm} + \sum_{i=1}^{\mathfrak{i}(T)+1} (p^2 X)^i 
\left\{ \sum_{j=0}^{v_p(\mathfrak{f}_T) - i+1} (p^3 X^2)^{j+1}
-\chi_p(\mathfrak{d}_T)\,pX 
\sum_{j=0}^{v_p(\mathfrak{f}_T) - i} (p^3 X^2)^{j+1} \right\} \\ %
\if0
\end{eqnarray*}
\begin{eqnarray*}
\lefteqn{\phantom{F_p(p^2 T;\, X)-(p^2 X+p^3 X^2) F_p(p T;\,X)+p^5 X^3 F_p(T;\,X)} }
\fi
&=& 
\left\{ \sum_{j=0}^{v_p(\mathfrak{f}_T)+2} (p^3 X^2)^j
-\chi_p(\mathfrak{d}_T)\,pX 
\sum_{j=0}^{v_p(\mathfrak{f}_T)+1} (p^3 X^2)^j \right\} \\
&& \hspace*{5mm} - \left\{ \sum_{j=0}^{v_p(\mathfrak{f}_T)+1} (p^3 X^2)^{j+1}
-\chi_p(\mathfrak{d}_T)\,pX 
\sum_{j=0}^{v_p(\mathfrak{f}_T)} (p^3 X^2)^{j+1} \right\} \\
&=& 
\left\{ \sum_{j=0}^{v_p(\mathfrak{f}_T)+2} (p^3 X^2)^j
-\chi_p(\mathfrak{d}_T)\,pX 
\sum_{j=0}^{v_p(\mathfrak{f}_T)+1} (p^3 X^2)^j \right\} \\
&& \hspace*{5mm} - \left\{ \sum_{j=1}^{v_p(\mathfrak{f}_T)+2} (p^3 X^2)^j
-\xi_p(\mathfrak{d}_T)\,pX 
\sum_{j=1}^{v_p(\mathfrak{f}_T)+1} (p^3 X^2)^j \right\} \\[3mm]
&=& 1 - \chi_p(\mathfrak{d}_T) p X.  
\end{eqnarray*}
\fi 
and hence, Equation (12) also holds for $r=2$. 
Now, we suppose that $r>2$. 
We note that every nondegenerate $T \in {\rm Sym}_{r}^{*}(\mathbb{Z}_p)$ is equivalent, over $\mathbb{Z}_p$, to a canonical form
\begin{center}
$
T=\left[
\begin{array}{c|c}
T_1 & {}  \\
\hline
{} & T_2
\end{array}
\right]$ 
\end{center}
for some $
T_1 \in {\rm Sym}_{2}^{*}(\mathbb{Z}_p)$ and $T_2 \in {\rm Sym}_{r-2}^{*}(\mathbb{Z}_p)\cap{\rm GL}(r-2,\mathbb{Q}_p)$. 
It follows from Theorems 4.1 and 4.2 in \cite{Kat99} that 
\begin{eqnarray*}
{\mathcal{S}_p(T;\,X,\,1) \over 1-\xi_p(
(-1)^{r/2}\det T
)\,p^{r/2} X}&=&
{\mathcal{S}_p
(T_2;\,p^2X,\,1) \over 1- \xi_p(
(-1)^{r/2-1}\det T_2
)\,p^{r/2+1}X} \\
&& \times {(1-p^{(r-1)(r+2)/2}X^{r-1})(1 - p^{r(r+1)/2}X^r) 
\over 1- p^{r+1}X^2}
\end{eqnarray*}
if $r$ is even, and 
\[
\mathcal{S}_p(T;\,X,\,1)=\mathcal{S}_p(T_2;\,p^2X,\,1)\cdot
{
(1-p^{(r-1)(r+2)/2}X^{r-1})(1 - p^{r(r+1)/2}X^r)
\over 1- p^{r+2
}X^2
}
\]
if $r$ is odd.  
(See also \cite[\S 3]{Kat01}. ) 
Thus, it is proved by induction on $r$ that Equation (12) (and thus, (10)\,) holds in general. 
This completes the proof. 
\if0
\begin{eqnarray*}
\lefteqn{
\mathcal{S}_p
(T;\,X,\,1)} \\
&=&(1-\xi_p(
(-1)^{n/2}\det T)p^{n/2} X) \\
&& \times{\mathcal{R}_{p,n-2}(X,\,1)(1-p^{(n-1)(n+2)/2}X^{n-1})(1 - p^{n(n+1)/2}X^{n}) 
\over \mathcal{P}_{p,n-2}(X,\,1)(1- p^{n+1}X^2)} \\
&=&(1-\xi_p(
(-1)^{n/2}\det T)p^{n/2} X) \cdot
{\mathcal{R}_{p}^{(n)}(X,\,1) \over \mathcal{P}_{p}^{(n)}(X,\,1)}. 
\end{eqnarray*}
We complete the proof. 
\fi 
\end{proof}
\if0
B\"ocherer-Sato \cite{B-S87} showed that the equation (13) remains valid even for $p=2$. Thus, we may prove the preceding theorem for $p=2$ in the same way. In that case, the only thing to be attended to is that we could not always exploit Theorem 4.1 in \cite{Kat99} to prove the equation (14) but Theorem 4.2 in [ibid]. 
\end{rem}
\fi 

\if0
\vspace*{2mm}
\begin{rem}
Theorem 3.2 (i) 
insists that the assignment $E_{\kappa}^{(n)} \mapsto (E_{\kappa}^{(n)})^{*}$ can be regarded as a $p$-stabilization procedure generalizing the ordinary $p$-stabilization of $E_{\kappa}^{(1)}$ \`a la Serre \cite{Ser73} (cf. \S 1). 
When $g=2$, Skinner-Urban \cite{S-U06} has already dealt with a similar type of $p$-stabilization so that the resulting eigenvalue of $U_{p,2}$ is a $p$-adic unit. Accordingly, we 
call $(E_{\kappa}^{(n)})^{*}$ 
{\it semi-ordinary} at $p$, 
even in the case where $g>2$ (cf. \cite{Kaw10}). 

As mentioned by \cite{S-U06}, it turns out that this type of $p$-stabilization, which is so-called {\it semi-ordinary} $p$-stabilization, concerning only on the $U_{2n,p}$-eigenvalue is sufficient to adapt the ordinary theory. Indeed, the operator $U_{2n,p}$ is given by the trace of the geometric Frobenius element ${\rm Frob}_p$ acting on the ordinary 
cohomology of the Siegel modular variety. 

However, it should be emphasized that $(E_{\kappa}^{(n)})^*$ does not admit the ordinary condition at $p$ in the sense of Hida (cf. \cite{Hid02,Hid04}). 
Indeed, as is evident from equations (7) and (8), there is no way to produce from $E_{\kappa}^{(n)}$ to a simultaneous eigenfunction of $U_{p,1},\cdots,\,U_{p,g-1}$ and $U_{p,n}$ such that all the corresponding eigenvalues are $p$-adic units. 
It is because the fact that the spherical representation of 
${\rm GSp}_{2n}(\mathbb{Q}_p)$ arising from $E_{\kappa}^{(n)}$ is a degenerate principal series. 
\end{rem}
\fi

\if0
\begin{rem}
Let $\mathcal{Q}_p(E_{\kappa}^{(n)};\,Y)\in \mathbb{C}[Y]$ be the Hecke polynomial of $E_{\kappa}^{(n)}$ at $p$ (cf. \S2). 
Put 
$\mathcal{Q}_p^{*}(E_{\kappa}^{(1)};\,Y):={\mathcal{Q}_p(E_{\kappa}^{(1)};\,Y)/(1-Y)}=1-p^{\kappa-1}Y$, 
and  
\begin{eqnarray*}
\mathcal{Q}_p^{*}(E_{\kappa}^{(n)};\,Y)&:=& 
{\mathcal{Q}_p(E_{\kappa}^{(n)};\,Y) \over 
(1-\psi_{p,0}(E_{\kappa}^{(n)})\,Y)(1-\psi_{p,0}(E_{\kappa}^{(n)})\prod_{i=1}^{[n/2]}\psi_{p,i}(E_{\kappa}^{(n)})\,Y)} \\
&=&
{\mathcal{Q}_p(E_{\kappa}^{(n)};\,Y) \over 
(1-p^{[n/2]\cdot(\kappa-[n/2])-[n/2]\cdot([n/2]+1)/2}\,Y)(1-Y)} 
\end{eqnarray*}
if $n>1$. We easily see that 
$\mathcal{R}_{p}^{(n)}(p^{\kappa-n-1},\,Y)$ 
divides $\mathcal{Q}_p^*(E_{\kappa}^{(n)};\,Y)$. 
Thus, we may deduce from Theorem 3.2 (ii) that 
\begin{equation}
(E_{\kappa}^{(n)})^*={
\mathcal{P}_{p}^{(n)}(p^{{\kappa}-n-1},\,1) \over \mathcal{Q}_p^{*}(E_{\kappa}^{(n)};\,1)
}\cdot E_{\kappa}^{(n)}|_{\kappa}\,\widetilde{\mathcal{Q}}_p^{*}(E_{\kappa}^{(n)};\,U_{p,n}),  
\end{equation}
where 
$\widetilde{\mathcal{Q}}_p^*(E_{\kappa}^{(n)};\,Y)
:=Y^{2^n - 2+\delta_{n,1}} \, \mathcal{Q}_p^{*}(E_{\kappa}^{(n)};\,Y^{-1})
$. 
We note that the right-hand side of the preceding equation looks quite similar to the $p$-stabilization 
of $E_{\kappa}^{(n)}$ 
in the sense of Courtieu-Panchishkin \cite{C-P04}, which also admits the semi-ordinary condition at $p$ (cf. Remark 2.5 in [ibid]).  
Indeed, these two semi-ordinary $p$-stabilizations are eventually the same up to constant multiples, however, Theorem 3.2 (ii) turned out that the action of 
\[
{\mathcal{P}_{p}^{(n)}(p^{{\kappa}-n-1},\,1) \over \mathcal{R}_{p}^{(n)}(p^{{\kappa}-n-1},\,1)}\cdot
\widetilde{\mathcal{R}}_{p}^{(n)}(p^{{\kappa}-n-1},\,U_{p,n})
\]
can be thought of 
the heart of all possible semi-ordinary $p$-stabilization procedures for $E_{\kappa}^{(n)}$, in which the constant 
$\mathcal{P}_{p}^{(n)}(p^{{\kappa}-n-1},\,1)/\mathcal{R}_{p}^{(n)}(p^{{\kappa}-n-1},\,1)$ contributes 
non-trivially 
to the elimination of local factors of $A_T(E_{\kappa}^{(n)})$'s at $p$ except for $n=1$ and $2$ (cf. Example 3.1). 
\end{rem}
\fi

\if0 
Indeed, we have 
\begin{eqnarray*}
\lefteqn{
{\widetilde{\mathcal{P}}_{p,2n}(\alpha_p(f)^n) \over \widetilde{\mathcal{Q}}_{p,2n}^{*}(\alpha_p(f)^n)}\cdot {\rm Lift}^{(2n)}(f)|_{k+n}\,\widetilde{\mathcal{Q}}_{p,2n}^{*}(U_{p,2n})
} \\
&=& 
{\widetilde{\mathcal{R}}_{p,2n}(\alpha_p(f)^n) \over \widetilde{\mathcal{Q}}_{p,2n}^{*}(\alpha_p(f)^n)} \cdot 
{\rm Lift}^{(2n)}(f)^{*}
|_{k+n}\,{\widetilde{\mathcal{Q}}_{p,2n}^{*}(U_{2n,p}) \over \widetilde{\mathcal{R}}_{p,2n}(U_{p,2n})} \\
&=& 
{\rm Lift}^{(2n)}(f)^{*}. 
\end{eqnarray*}
\begin{eqnarray*}
{\rm Lift}^{(2n)}(f)^{\dag}&:=& {\rm Lift}^{(2n)}(f)|_{k+n} (U_{p,2n}-\psi_0(p))\cdot\widetilde{\mathcal{Q}}_p^{*}(U_{p,2n}) \\
&=& {\rm Lift}^{(2n)}(f)|_{k+n} \widetilde{\mathcal{Q}}_p(U_{p,2n})\cdot(U_{p,2n}-\psi_0(p)\prod_{i=1}^n \psi_i(p))^{-1}, 
\end{eqnarray*}
which possesses the same eigenvalue of $U_{2n,p}$ as ${\rm Lift}^{(2n)}(f)^{*}$. 
Indeed, it is known that $U_{2n,p}$ annihilates the polynomial $\widetilde{\mathcal{Q}}_p(Y)$ (cf. Proposition 6.10 in \cite{A-Z95}).  
Thus, combing this fact with the equation (13) turns out that 
\begin{eqnarray*}
{\rm Lift}^{(2n)}(f)^{\dag}|_{k+n}U_{p,2n}
 &=& \psi_0(p) \prod_{i=1}^{n} \psi_i(p)
\cdot {\rm Lift}^{(2n)}(f)^{\dag} \\
&=&\alpha_p(f)^n 
\cdot {\rm Lift}^{(2n)}(f)^{\dag}.  
\end{eqnarray*}
It is easy to verify that $U_{p,2n}$ annihilates $\widetilde{\mathcal{Q}}_p^*(Y)\cdot (Y-\psi_0(p)\prod_{i=1}^n \psi_{i}(p)) 
$ as well, and hence we may also prove 
that for ${\rm Lift}^{(2n)}(f)^{*}$, the corresponding eigenvalue of $U_{2n,p}$ is $\alpha_p(f)^n$ 
along the same line as above. 
In any case, we have  
\[
{\rm Lift}^{(2n)}(f)^{\dag} = {(\alpha_p(f)^n - p^{nk-n(n+1)/2})\widetilde{\mathcal{Q}}_{p,2n}^{*}(\alpha_p(f)^n) \over \widetilde{\mathcal{P}}_{p,2n}(\alpha_p(f)^n)}\cdot
{\rm Lift}^{(2n)}(f)^{*}. 
\]
Thus ${\rm Lift}^{(2n)}(f)^{*}$ and ${\rm Lift}^{(2n)}(f)^{\dag}$ are essentially the same, however, the former can be regarded as the 
principal $p$-stabilization for ${\rm Lift}^{(2n)}(f)$ in the sense of Theorem 4.1, and also for the Siegel Eisenstein series $E_{k+n}^{(2n)}$ (cf. \S5.1 below).  
\fi 

\if0 
\section{Another aspect of the $p$-stabilized Siegel Eisenstein series}
For each integer $0 \le r \le g$, put 
\begin{eqnarray*}
G_{\kappa, r}^{(n)}(Z)&:=&2^{-[(g+1)/2]} \zeta(1-\kappa) \prod_{i=1}^{[g/2]} 
\zeta^{\{p\}}(1-2\kappa+2i) \\
&& \times \sum_{\scriptstyle 
M=\left(\begin{smallmatrix}
* & * \\
C & D
\end{smallmatrix}\right) \in {\rm P}_{2n} \cap {\rm Sp}_{2n}(\mathbb{Z}) \backslash {\rm Sp}_{2n}(\mathbb{Z}), \atop 
{\scriptstyle {\rm rank}_{\mathbb{Z}/p\mathbb{Z}}(C\, {\rm mod }\, p)=r}
} \det(CZ+D)^{-\kappa},
\end{eqnarray*}
and 
Then we define a linear combination of $G_{\kappa, r}^{(n)}$'s as follows: 
\begin{equation}
G_{\kappa}^{(n)}(Z):=\sum_{r=0}^{g} c_{\kappa, r}^{(n)}(p) G_{\kappa, r}^{(n)}(Z), 
\end{equation}
where 
\[
c_{\kappa, r}^{(n)}(p):=
\left\{\begin{array}{ll}
p^{-r/2} \displaystyle\prod_{i=0}^{r/2} (1-p^{-2i-1})(1-p^{2\kappa-r+2i-3}) & \textrm{ if $r$ is even}, \\[5mm]
0 & \textrm{ if $r$ is odd}. 
\end{array}
\right.
\]

\begin{prop}
We have 
\begin{equation}
E_{\kappa}^{(n),*}=G_{\kappa}^{(n)}. 
\end{equation}
\end{prop}
\fi 

\vspace*{1mm}
As a straightforward conclusion of Theorem 4.2 above, we have  
\vspace*{1mm}
\begin{thm} 
Let $\chi$ be a Dirichlet character modulo 
$M$, where $M$ is odd and coprime to $p$, taken as above 
and $\omega^a$ a power of the Teichm\"uller character with $0 \le a <\varphi({\bf p})$\footnote{Here $\varphi$ denotes Euler's totient function.}, respectively. 
For each $n \ge 1$,  
there exists a formal Fourier expansion  
\[\mathcal{E}_{\chi\omega^a}^{(n)}
(X)=\sum_{\scriptstyle T \in {\rm Sym}_n^*(\mathbb{Z}), \atop {\scriptstyle T \ge 0}} 
\mathcal{A}_{\chi\omega^a}(T;\,X)
\, {\bf q}^T 
\in F_{\Lambda}[[{\bf q}]]^{(n)}, 
\]
where $F_{\Lambda}$ is the field of fractions of $\Lambda=\mathbb{Z}_p[\chi][[X]]$, 
such that for each positive integer 
$\kappa > n+1$ with $\chi(-1)=(-1)^{\kappa}$ and $\kappa \equiv a \pmod{\varphi({\bf p})}$, we have 
\[
\mathcal{E}_{\chi\omega^a}^{(n)}
((1+{\bf p})^{\kappa} -1)=
(E_{\kappa,\,\chi}^{(n)})^*
\in \mathscr{M}_{\kappa}(\Gamma_0(Mp),\,\chi)^{(n)}. 
\]   
Moreover, put 
\begin{eqnarray*}
\mathcal{B}^{(n)}(X)&:=&
\prod_{i=1}^{[n/2]} \{(1+{\bf p})^{-2i} (1+X)^2 -1\}
\prod_{j=0}^{[n/2]} \{(1+{\bf p})^{-j} (1+X) -1\} \\
&=& 
X \prod_{i=1}^{[n/2]} \{(1+{\bf p})^{-i} (1+X) -1\}^2
\cdot \{(1+{\bf p})^{-i} (1+X) +1\}.
\end{eqnarray*}
Then \,$\overline{\mathcal{E}}_{\chi\omega^a}^{\,(n)}
(X):=\mathcal{B}^{(n)}(X) \cdot 
\mathcal{E}_{\chi\omega^a}^{(n)}
(X)$ belongs to $\Lambda[[{\bf q}]]^{(n)}$. 
\end{thm}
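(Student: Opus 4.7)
The plan is to interpolate the explicit Fourier coefficient formula of Theorem 4.2(II) factor-by-factor via the Kubota--Leopoldt $p$-adic $L$-function together with an Iwasawa substitution for the power-type ingredients, and then to verify that $\mathcal{B}^{(n)}(X)$ is precisely designed to absorb every pole that can arise.

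For the interpolation step, recall that for every Dirichlet character $\psi$ whose conductor divides $M{\bf p}^{\infty}$, the Kubota--Leopoldt construction yields a unique $\mathcal{G}_\psi(Y) \in F_{\Lambda}$ characterized by
\[
\mathcal{G}_\psi\!\left((1+{\bf p})^{\kappa'}-1\right) \;=\; L^{\{p\}}\!\left(1-\kappa',\,\psi\omega^{-\kappa'}\right) \qquad (\kappa' \in \mathbb{Z}_{\ge 1}),
\]
and which lies in $\Lambda$ unless $\psi$ is trivial, in which case $Y\cdot \mathcal{G}_{\mathbf{1}}(Y)\in\Lambda$ and the pole at $Y=0$ is simple. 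In parallel, for each prime $l\ne p$, the assignment $\kappa \mapsto l^{\kappa}$ interpolates to $\omega(l)^{a}(1+X)^{s_l}\in\Lambda$, where $s_l := \log_{1+{\bf p}}\langle l\rangle \in \mathbb{Z}_p$ and $(1+X)^{s_l}:=\sum_{k\ge 0}\binom{s_l}{k}X^k$, so the polynomials $F_l(T';\,\chi(l)l^{\kappa-r-1})$ become elements of $\Lambda$ in the variable $X$.

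With these ingredients in hand, for each $T \ge 0$ conjugate over ${\rm GL}(n,\mathbb{Z})$ to ${\rm diag}(T',0_{n-r})$ with $T'$ nondegenerate of rank $r$, I would define $\mathcal{A}_{\chi\omega^a}(T;\,X) \in F_{\Lambda}$ as the product obtained from the formula of Theorem 4.2(II) by (a) substituting $\mathcal{G}_{\chi^2\omega^{2a-2i}}\!\bigl((1+{\bf p})^{-2i}(1+X)^{2}-1\bigr)$ for each $L^{\{p\}}(1-2\kappa+2i,\,\chi^2)$, using the reparametrization $\kappa'=2\kappa-2i$; (b) substituting $\mathcal{G}_{(\mathfrak{d}_{T'}/*)\chi\omega^{a-r/2}}\!\bigl((1+{\bf p})^{-r/2}(1+X)-1\bigr)$ for $L^{\{p\}}(1-\kappa+r/2,\,(\mathfrak{d}_{T'}/*)\chi)$ when $r$ is even, using $\kappa'=\kappa-r/2$; and (c) inserting the $\Lambda$-adic interpolants of the $F_l$-polynomials. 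Setting $\mathcal{E}_{\chi\omega^a}^{(n)}(X) := \sum_{T \ge 0} \mathcal{A}_{\chi\omega^a}(T;\,X)\,{\bf q}^T$, the specialization identity $\mathcal{E}_{\chi\omega^a}^{(n)}((1+{\bf p})^{\kappa}-1) = (E_{\kappa,\,\chi}^{(n)})^*$ for admissible $\kappa \equiv a \pmod{\varphi({\bf p})}$ follows at once from Theorem 4.2(II) and the interpolation property of $\mathcal{G}_\psi$.

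The main obstacle, and the core of the second assertion, is the integrality claim $\mathcal{B}^{(n)}(X)\cdot\mathcal{E}_{\chi\omega^a}^{(n)}(X) \in \Lambda[[{\bf q}]]^{(n)}$, which requires matching every potential pole of $\mathcal{A}_{\chi\omega^a}(T;X)$ to a zero of $\mathcal{B}^{(n)}(X)$. The only source of poles is $\mathcal{G}_{\mathbf{1}}(Y)$, and these arise in two ways. First, a factor $\mathcal{G}_{\chi^2\omega^{2a-2i}}((1+{\bf p})^{-2i}(1+X)^2-1)$ acquires at most a simple pole at the zero of $(1+{\bf p})^{-2i}(1+X)^2-1$ when $\chi^2\omega^{2a-2i}$ is trivial, which is cancelled by the corresponding $i$-th factor of the first product in $\mathcal{B}^{(n)}$. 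Second, a factor $\mathcal{G}_{(\mathfrak{d}_{T'}/*)\chi\omega^{a-r/2}}((1+{\bf p})^{-r/2}(1+X)-1)$ acquires at most a simple pole at the zero of $(1+{\bf p})^{-r/2}(1+X)-1$ when the underlying character is trivial, cancelled by the $j=r/2$ factor in the second product (the case $j=0$, $r=0$ accounting for the constant-term factor $L^{\{p\}}(1-\kappa,\chi)$). Because $T$ ranges over all half-integral positive semidefinite matrices of arbitrary rank $r \in \{0,1,\ldots,n\}$, and the quadratic twist $(\mathfrak{d}_{T'}/*)$ can a priori realize any Kronecker character of the relevant conductor, a uniform cancellation requires every $i \in \{1,\ldots,[n/2]\}$ and every $j \in \{0,1,\ldots,[n/2]\}$ to appear in $\mathcal{B}^{(n)}$; this is exactly its definition. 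Combining these cancellations termwise concludes the proof.
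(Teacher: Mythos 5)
Your proposal is correct and follows essentially the same route as the paper: the paper likewise defines $\mathcal{A}_{\chi\omega^a}(T;X)$ by substituting the Deligne--Ribet/Kubota--Leopoldt power series (with the change of variable $Y=(1+{\bf p})^{-2i}(1+X)^2-1$, resp. $(1+{\bf p})^{-r/2}(1+X)-1$) and the Iwasawa interpolant $(1+X)^{s(\langle l\rangle)}$ of $\langle l\rangle^{\kappa}$ into the explicit formula of Theorem 4.2(II), and then observes that the only possible (simple) poles, coming from trivial branch characters, are cleared by exactly the factors of $\mathcal{B}^{(n)}(X)$. The only nit is that the conductor of the twist $\left({\mathfrak{d}_{T'}\over *}\right)\chi\omega^{a-r/2}$ need not divide $M{\bf p}^{\infty}$, but this does not affect the argument since the $p$-adic $L$-function exists for each such quadratic twist separately.
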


\begin{proof}
As is well-known by Deligne-Ribet \cite{D-R80} (generalizing the previous work of Kubota-Leopoldt), 
associated to a quadratic Dirichlet character $\xi$, a Dirichlet character $\chi$ taken as above, 
and a power of the Teichm\"uller character $\omega^a$ with $0 \le a<\varphi({\bf p})$
, there exists $\overline{\mathcal{L}}(\xi\chi\omega^a;\, X) \in 
\Lambda
$ such that for each positive integer $k>1$, we have
\[
\overline{\mathcal{L}}(\xi\chi\omega^a;\,
(1+{\bf p})^k -1)=
\left\{
\begin{array}{ll}
((1+{\bf p})^k -1)\cdot L^{\{p\}}(1- k,\,\omega^{-k}) & \textrm{if $\xi\chi\omega^a$ is trivial}, \\[2mm]
L^{\{p\}}(1- k,\,\xi\chi\omega^{a-k}) & \textrm{otherwise}. 
\end{array}
\right.
\]
Accordingly, put 
\[
\mathcal{L}(\xi\chi\omega^a;\, X):=
\left\{
\begin{array}{ll}
X^{-1} \overline{\mathcal{L}}(\xi\chi\omega^a;\, X) & \textrm{if $\xi\chi\omega^a$ is trivial}, \\[2mm]
\overline{\mathcal{L}}(\xi\chi\omega^a;\, X) & \textrm{otherwise}. 
\end{array}
\right.
\]
We easily see that  
$\mathcal{L}(\xi\chi\omega^a;\, (1+{\bf p})^k -1)=L^{\{p\}}(1- k,\,\xi\chi\omega^{a-k})$ for each $k>1$. 
More generally, it turns out that if $\varepsilon : 1+{\bf p}\mathbb{Z}_p \to (\mathbb{Q}_p^{\rm alg})^{\times}$ is a character of finite order, 
then 
\begin{equation}
\mathcal{L}(\xi\chi\omega^a;\, \varepsilon(1+{\bf p})(1+{\bf p})^k -1)=L^{\{p\}}(1- k,\,\xi\chi\omega^{a-k}\varepsilon)
\end{equation}
for any $k>1$. 
On the other hand, for each $x \in 1+{\bf p}\mathbb{Z}_p$, put $s(x):=\log_p(x)/\log_p(1+{\bf p})$, where $\log_p
$ is the $p$-adic logarithm function in the sense of Iwasawa, and thus we have $s: 1+{\bf p}\mathbb{Z}_p \stackrel{\sim}{\to} \mathbb{Z}_p$. 
Then for each $T=\left[\begin{array}{c|c} 
T' & \\ \hline
& 0_{n-r}
\end{array}\right] \in {\rm Sym}_n^*(\mathbb{Z})$ \vspace*{1mm}with $T' \in {\rm Sym}_r^*(\mathbb{Z}) \cap {\rm GL}(r,\mathbb{Q})$ and $0 \le r \le n$, we define 
$\mathcal{A}_{\chi\omega^a}(T;\,X) \in F_{\Lambda}$ as follows:  
\begin{eqnarray}
\mathcal{A}_{\chi\omega^a}(T;\,X) 
&=& 2^{[(r+1)/2]-[(n+1)/2]} \prod_{i=[r/2]+1}^{[n/2]} \mathcal{L}(\chi\omega^{2a-2i};\,
(1+{\bf p})^{-2i}(1+X)^2 -1)  \\ 
&&\times 
\left\{
\begin{array}{ll}
\mathcal{L}(\left({\mathfrak{d}_{T'} \over *}\right) \chi\omega^{a-r/2};\,(1+{\bf p})^{-r/2}(1+X) -1) & \\[5mm]
\vspace*{5mm}
\hspace*{5mm}\times \displaystyle\prod_{\scriptstyle l \,\mid\, \mathfrak{f}_{T'}, \atop {\scriptstyle l \ne p}}  
F_l(T';\, \chi\omega^a(l)\, l^{-r-1}(1+X)^{s(\langle l \rangle)}) & \textrm{ if $r$ is even}, \\

 \displaystyle\prod_{\scriptstyle l \,\mid\, \mathfrak{D}_{T'}, \atop {\scriptstyle l \ne p} }  
F_l(T';\, \chi\omega^a(l)\, l^{-r-1}(1+X)^{s(\langle l \rangle)}) & \textrm{ if $r$ is odd}.  
\end{array}\right. 
\nonumber 
\end{eqnarray}
It follows from Theorem 4.2 (I\hspace{-.1em}I) that if $\kappa > n+1$, $\chi(-1)=(-1)^{\kappa}$ and further if $\kappa \equiv a \pmod{\varphi({\bf p})}$ (or equivalently, $\omega^{a-\kappa}$ is trivial), 
then  
\[
\mathcal{A}_{\chi\omega^a}(T;\,(1+{\bf p})^\kappa -1)=
A_{\kappa,\,\chi}^{*}(T). 
\]
Thus we obtain the former assertion. 
Moreover, it follows directly from Equation (14) that $\mathcal{B}^{(n)}(X)\cdot\mathcal{A}_{\chi\omega^a}(T;\,X) \in \Lambda$ for all $T$. 
Therefore $\overline{\mathcal{E}}_{\chi\omega^a}^{\,(n)}
(X)
\in \Lambda[[{\bf q}]]^{(n)}$. 
This completes the proof.  
\end{proof}

\if0
\begin{rem}
Whenever $p$ is odd, 
Panchishkin \cite{Pan00} has constructed a similar formal Fourier expansion belonging to $\mathcal{L}[[{\bf q}]]^{(n)}$, 
which interpolates characteristic twists of $E_{\kappa,\,\chi}^{(n)}$ modulo $p$. 
In that case, 
we may derive some $p$-adic interpolation properties of their Fourier coefficients taken  
only for $0< T \in {\rm Sym}_{n}^{*}(\mathbb{Z})$ with $
\gcd(p,\, \det T)=1$, and these coefficients are coincident with those of $A_{\kappa,\,\chi}^{*}(T)=A_{\kappa,\,\chi}(T)$ for such $T$. 
\end{rem}
\fi 

\section{$\Lambda$-adic Siegel Eisenstein series}

In this section, we fix an {\it odd}\, prime number $p$. We show that for a Dirichlet character $\chi$ modulo $M$ with $p \nmid M$ 
and an 
integer $a$ with $0 \le a < p-1$, 
the formal Fourier expansion $\mathcal{E}_{\chi\omega^a}^{(n)}(X)$ 
(resp. $\overline{\mathcal{E}}_{\chi\omega^a}^{\,(n)}(X)$) with coefficients in $F_{\Lambda}$ 
(resp. $\Lambda=\mathbb{Z}_p[\chi][[X]]$) defined in Theorem 4.4, give rise to classical Siegel modular forms via 
the specialization at $X=\varepsilon(1+p
)(1+p
)^{\kappa} -1$, where $\kappa$ is a positive integer sufficiently large 
and $\varepsilon : 1+p\mathbb{Z}_p \to (\mathbb{Q}_p^{\rm alg})^\times$ is a character of finite order. 

\medskip

First, from Lemma 2.1\,(I\hspace{-.1em}I), we deduce the following: 

\begin{thm}
If $\kappa$ is a 
positive integer with $\kappa>n+1
$ 
and further if $\varepsilon : 1+p\mathbb{Z}_p \to (\mathbb{Q}_p^{\rm alg})^\times$ is a character of exact order $p^m$ for some nonnegative integer $m$,  
then 
\[
\mathcal{E}_{\chi\omega^a}^{(n)}
(\varepsilon(1+p
)(1+p
)^{\kappa} -1)
=E_{\kappa,\,\chi\omega^{a-\kappa}\varepsilon}^{(n)} \in \mathscr{M}_\kappa(\Gamma_0(Mp^{m+1}
), \chi\omega^{a-\kappa}\varepsilon)^{(n)} 
\]
as long as $\omega^{2a-2\kappa}\varepsilon^2$ is non-trivial. 
\end{thm}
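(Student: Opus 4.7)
The plan is a direct Fourier--coefficient comparison: I shall substitute $X=\varepsilon(1+p)(1+p)^{\kappa}-1$ (so that $1+X=\varepsilon(1+p)(1+p)^{\kappa}$) into the explicit expression~(14) for $\mathcal{A}_{\chi\omega^a}(T;X)$ and show, block by block, that the result matches the Fourier coefficient of $E_{\kappa,\chi\omega^{a-\kappa}\varepsilon}^{(n)}$ provided by Lemma~2.1\,(I\hspace{-.1em}I). No new analytic input is needed beyond equation~(13), Lemma~2.1, and the Iwasawa-logarithm identities of the Notation section.

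I start with the arguments of the Siegel-series factors $F_l$. For every prime $l\neq p$, since $(1+p)^{s(\langle l\rangle)}=\langle l\rangle=\omega(l)^{-1}l$ by definition of $s$, and since $\varepsilon$ kills the torsion part of $\mathbb{Z}_p^{\times}$ so that $\varepsilon(1+p)^{s(\langle l\rangle)}=\varepsilon(\langle l\rangle)=\varepsilon(l)$, I obtain
\[
(1+X)^{s(\langle l\rangle)}=\varepsilon(l)\,\omega^{-\kappa}(l)\,l^{\kappa}.
\]
Consequently the argument $\chi\omega^a(l)\,l^{-r-1}(1+X)^{s(\langle l\rangle)}$ in~(14) collapses to $(\chi\omega^{a-\kappa}\varepsilon)(l)\,l^{\kappa-r-1}$, which is exactly what appears inside $F_l$ in Lemma~2.1. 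Next I evaluate the $\mathcal{L}$-factors: using $(1+X)^2=\varepsilon^2(1+p)(1+p)^{2\kappa}$, equation~(13) gives
\[
\mathcal{L}\bigl(\chi\omega^{2a-2i};\,(1+p)^{-2i}(1+X)^2-1\bigr)=L^{\{p\}}\bigl(1-2\kappa+2i,\,(\chi\omega^{a-\kappa}\varepsilon)^2\bigr),
\]
and analogously $\mathcal{L}(\bigl(\tfrac{\mathfrak{d}_{T'}}{*}\bigr)\chi\omega^{a-r/2};\,(1+p)^{-r/2}(1+X)-1)$ specializes to $L^{\{p\}}(1-\kappa+r/2,\bigl(\tfrac{\mathfrak{d}_{T'}}{*}\bigr)\chi\omega^{a-\kappa}\varepsilon)$.

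The last algebraic step is to eliminate the superscript $\{p\}$ and to reinstate the $l=p$ factor in the Siegel-series product. Here the hypothesis enters decisively: since $\omega^{2a-2\kappa}\varepsilon^2$ is non-trivial, the character $\omega^{a-\kappa}\varepsilon$ has order $>2$, hence is non-trivial, so $p$ divides the conductor of $\chi\omega^{a-\kappa}\varepsilon$ and $(\chi\omega^{a-\kappa}\varepsilon)(p)=0$. This has two simultaneous effects: each $L^{\{p\}}$-value becomes the corresponding complete Dirichlet $L$-value (the missing Euler factor at $p$ is trivially $1$), and $F_p(T';(\chi\omega^{a-\kappa}\varepsilon)(p)\,p^{\kappa-r-1})=F_p(T';0)=1$ since the polynomials $F_p(\cdot;X)$ have constant term $1$. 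I may therefore harmlessly enlarge the products $\prod_{l\mid\mathfrak{f}_{T'},\,l\ne p}$ and $\prod_{l\mid\mathfrak{D}_{T'},\,l\ne p}$ of~(14) to the full products of Lemma~2.1. Combining all three calculations yields $\mathcal{A}_{\chi\omega^a}(T;\varepsilon(1+p)(1+p)^{\kappa}-1)=A_{\kappa,\chi\omega^{a-\kappa}\varepsilon}(T)$ for every $T\ge 0$, so the formal $q$-expansions agree.

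It remains to verify that $E_{\kappa,\chi\omega^{a-\kappa}\varepsilon}^{(n)}$ genuinely exists as a classical Siegel modular form in $\mathscr{M}_{\kappa}(\Gamma_0(Mp^{m+1}),\chi\omega^{a-\kappa}\varepsilon)^{(n)}$: its conductor divides $Mp^{m+1}$ and is odd because both $M$ and $p$ are odd, the local square is $\chi_l^2$ at each $l\mid M$ and non-trivial by the standing hypothesis on $\chi$, and at $p$ the local square is $\omega^{2a-2\kappa}\varepsilon^2$, non-trivial by assumption; thus $\chi\omega^{a-\kappa}\varepsilon$ fits case~(ii) of \S 2, so the target Eisenstein series is well-defined and coincides with our specialization. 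I expect the main bookkeeping obstacle to be the simultaneous tracking of all characters (the fixed $\chi$, the Teichm\"uller power $\omega^a$, the wild twist $\varepsilon$, and the Kronecker symbol $\bigl(\tfrac{\mathfrak{d}_{T'}}{*}\bigr)$) through both the $L$-function and the Siegel-series factors, and the verification that the reinstated Euler factor at $p$ really is trivial; modulo this combinatorial care, the proof reduces to a mechanical substitution into~(13) and~(14) followed by an appeal to Lemma~2.1.
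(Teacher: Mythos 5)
Your proposal is correct and follows essentially the same route as the paper: specialize the coefficient formula (14) at $X=\varepsilon(1+p)(1+p)^{\kappa}-1$ using (13), and match the result with Lemma 2.1\,(I\hspace{-.1em}I) for the character $\chi\omega^{a-\kappa}\varepsilon$. You merely make explicit two points the paper leaves tacit — that non-triviality of $\omega^{2a-2\kappa}\varepsilon^2$ forces the relevant characters to be ramified at $p$, so the omitted Euler factors and the $F_p$-factor are all trivially $1$, and that $\chi\omega^{a-\kappa}\varepsilon$ satisfies the hypotheses of \S 2\,(ii) — both of which are accurate.
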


\begin{proof}For each $T=\left[\begin{array}{c|c} 
T' & \\ \hline
& 0_{n-r}
\end{array}\right] \in {\rm Sym}_n^*(\mathbb{Z})$ \vspace*{1mm}with $T' \in {\rm Sym}_r^*(\mathbb{Z}) \cap {\rm GL}(r,\mathbb{Q})$ and $0 \le r \le n$, 
the equation (14) is specialized as 
\begin{eqnarray}
\mathcal{A}_{\chi\omega^a}(T;\,\varepsilon(1+p
) (1+p
)^{\kappa}-1)
&=& 2^{[(r+1)/2]-[(n+1)/2]} \prod_{i=[r/2]+1}^{[n/2]} L^{\{p\}}(1-2\kappa+2i, \chi\omega^{2a-2\kappa}\varepsilon^2) \nonumber \\[2mm] 
&& \times 
\left\{
\begin{array}{ll}
L^{\{p\}}(1-\kappa+r/2, \, \left({\mathfrak{d}_{T'} \over *}\right) \chi\omega^{a-\kappa}\varepsilon) 
 & \\[5mm]
\hspace*{3mm}\times \displaystyle\prod_{\scriptstyle l \,\mid\, \mathfrak{f}_{T'}, \atop {\scriptstyle l \ne p}}  
F_l(T';\, \chi\omega^{a-\kappa}\varepsilon(l)\,l^{\kappa-r-1}) & \textrm{ if $r$ is even}, \vspace*{4mm} \\
 \displaystyle\prod_{\scriptstyle l \,\mid\, \mathfrak{D}_{T'}, \atop {\scriptstyle l \ne p} }  
F_l(T';\, \chi\omega^{a-\kappa}\varepsilon(l)\, l^{\kappa-r-1}) & \textrm{ if $r$ is odd}.  
\end{array}\right. 
\nonumber
\end{eqnarray}
Thus, if $\omega^{2a-2\kappa}\varepsilon^2$ is non-trivial, then Lemma 2.1 (I\hspace{-.1em}I) yields  
\[
\mathcal{A}_{\chi\omega^a}(T;\,\varepsilon(1+{\bf p}) (1+{\bf p})^{\kappa}-1)=A_{\kappa,\,\chi\omega^{a-\kappa}\varepsilon}(T). 
\]
This completes the proof. 
\end{proof}

On the other hand, by exploiting the vertical control theorem for $p$-adic Siegel modular forms in the sense of Hida \cite{Hid02}, 
we may deduce a slight weaker version of the preceding theorem as follows: 

\begin{thm}
If $\kappa$ is a positive integer with ${\kappa>
n(n+1)/2}$,  
and further if $\varepsilon : 1+
p\mathbb{Z}_p \to (\mathbb{Q}_p^{\rm alg})^\times$ is a 
character of exact order $p^m$ for some nonnegative 
integer $m$, 
then 
\[
\mathcal{E}_{\chi\omega^a}^{(n)}(\varepsilon(1+
p)(1+
p)^{\kappa} -1) \in \mathscr{M}_{\kappa}(\Gamma_0(M
p^{m+1}),\chi\omega^{a-\kappa}\varepsilon)^{(n)}
 \]
as long as $\omega^{a-\kappa}\varepsilon$ is non-trivial. 
\end{thm}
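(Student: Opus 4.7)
The plan is to derive Theorem 5.2 from Theorem 5.1 via Hida's vertical control theorem for semi-ordinary $p$-adic Siegel modular forms. The starting observation is that, after multiplication by $\mathcal{B}^{(n)}(X)$, the formal Fourier expansion $\overline{\mathcal{E}}_{\chi\omega^a}^{\,(n)}(X)$ belongs to $\Lambda[[{\bf q}]]^{(n)}$ by Theorem 4.4, and so defines a genuine $\Lambda$-adic object of tame level $M$. By Theorem 5.1, for those arithmetic points $X = \varepsilon(1+p)(1+p)^{\kappa}-1$ with $\kappa > n+1$ and $\omega^{2a-2\kappa}\varepsilon^2$ non-trivial, the specialization is exactly the classical Siegel Eisenstein series $E_{\kappa,\chi\omega^{a-\kappa}\varepsilon}^{(n)}$, whose $U_{p,n}$-eigenvalue equals $1$ and is in particular a $p$-adic unit.

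Next I would invoke Hida's vertical control theorem for semi-ordinary $p$-adic Siegel modular forms, as developed in \cite{Hid02, Hid04} and refined in \cite{Pi11, B-P-S16}: the semi-ordinary projector $e^{\rm s.o.}_{p,n} := \lim_{r \to \infty} U_{p,n}^{r!}$ sends the space of $p$-adic Siegel modular forms of tame level $M$, weight $\kappa$ and nebentypus $\chi\omega^{a-\kappa}\varepsilon$ into the classical space $\mathscr{M}_{\kappa}(\Gamma_0(Mp^{m+1}), \chi\omega^{a-\kappa}\varepsilon)^{(n)}$ whenever $\kappa > n(n+1)/2$. Since the $\Lambda$-adic form $\overline{\mathcal{E}}_{\chi\omega^a}^{\,(n)}(X)$ agrees with classical semi-ordinary forms on a Zariski-dense set of arithmetic specializations (those covered by Theorem 5.1), it must itself be fixed by the $\Lambda$-adic semi-ordinary projector; because the projector commutes with specialization, every arithmetic specialization is a semi-ordinary $p$-adic Siegel modular form.

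Combining these inputs, for any $\kappa > n(n+1)/2$ and any finite order character $\varepsilon$ of exact order $p^m$ with $\omega^{a-\kappa}\varepsilon$ non-trivial (so that the denominator $\mathcal{B}^{(n)}$ does not vanish at the specialization and no spurious pole is introduced from the Kubota--Leopoldt factors appearing in the constant term of Theorem 4.4), the specialization $\mathcal{E}_{\chi\omega^a}^{(n)}(\varepsilon(1+p)(1+p)^{\kappa}-1)$ is a semi-ordinary $p$-adic Siegel modular form of weight $\kappa$, tame level $M$, and wild level $p^{m+1}$, hence classical by vertical control, and therefore lies in $\mathscr{M}_{\kappa}(\Gamma_0(Mp^{m+1}), \chi\omega^{a-\kappa}\varepsilon)^{(n)}$ as claimed.

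The hard part will be aligning the purely formal notion of $\Lambda$-adic $\bf q$-expansion employed in this paper with the cohomological framework underpinning Hida's vertical control theorem: concretely, one has to identify $\overline{\mathcal{E}}_{\chi\omega^a}^{\,(n)}(X)$ with an element of Hida's semi-ordinary $\Lambda$-adic module attached to tame level $M$, and check that the specialization map is compatible with the wild nebentypus twist by $\varepsilon$ at level $Mp^{m+1}$ together with the Iwahori structure at $p$ adapted to the Siegel parabolic. The bound $\kappa > n(n+1)/2$ is intrinsic to the vertical control statement in the Siegel setting, reflecting the vanishing threshold of higher coherent cohomology for sufficiently regular weights, and it accounts for the mild loss of precision relative to Theorem 5.1 (which identifies the specialization on the nose, at the expense of the sharper non-triviality hypothesis on $\omega^{2a-2\kappa}\varepsilon^2$).
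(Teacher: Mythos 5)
Your proposal is correct and follows essentially the same route as the paper: a Zariski-dense set of classical semi-ordinary specializations (the paper uses the points $X=(1+p)^{\kappa'}-1$ from Theorem 4.4 rather than the Theorem 5.1 points, but either works) placed into Pilloni's Hida-type family of overconvergent forms of tame level $M$, followed by the Bijakowski--Pilloni--Stroh classicality criterion using that the $U_{p,n}$-eigenvalue is $1$ and $\kappa>n(n+1)/2$. The technical caveat you flag --- identifying the formal $\Lambda$-adic ${\bf q}$-expansion with an element of the geometric/cohomological Hida module --- is precisely the step the paper also delegates to Th\'eor\`eme 1.1 of \cite{Pi11}.
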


\begin{proof}
It follows from Theorem 4.4 that 
$\mathcal{E}_{\chi\omega^a}^{(n)}((1+
p)^{\kappa'} -1) \in \mathscr{M}_{\kappa'}(\Gamma_0(Mp),\,\chi)^{(n)}$
for infinitely many integers $\kappa'$ with $\kappa' >n+1$, $\chi(-1)=(-1)^{\kappa'}$ and $\kappa' \equiv a \pmod{p-1}$. Thus, Th\'eor\`eme 1.1 in \cite{Pi11} (generalizing 
\cite{Hid02} in more general settings) turns out that 
if an integer $\kappa
$ is sufficiently large, 
then the specialization of $\mathcal{E}_{\chi\omega^a}^{(n)}(X)$ at $X=\varepsilon(1+
p)(1+
p)^{\kappa} -1$ 
\if0 
$\mathcal{E}^{(n)}(\omega^a)(\varepsilon(1+
p)(1+
p)^{\kappa} -1)$ 
\fi 
gives rise to an overconvergent $p$-adic Siegel modular form of weight $\kappa$, tame level $M$ and Iwahori level $p^{m+1}$ with 
character $\chi\omega^{a-\kappa}\varepsilon$ (for the precise definition, see \cite{S-U06,Pi11}). 
In addition, the explicit formula for Fourier coefficients of 
$\mathcal{E}_{\chi\omega^a}^{(n)}(\varepsilon(1+p
) (1+p
)^{\kappa}-1)$ also yields that it is an eigenform of $U_{p,n}$ whose eigenvalue 
is $1$. 
Therefore, if $\kappa$ satisfies the condition $\kappa>n(n+1)/2$ (cf. Hypoth\`ese 4.5.1 in \cite{B-P-S16}
), then the above-mentioned overconvergent form is indeed classical, that is, 
\[
\mathcal{E}_{\chi\omega^a}^{(n)}(\varepsilon(1+
p)(1+
p)^{\kappa} -1) \in \mathscr{M}_{\kappa}(\Gamma_0(M
p^{m+1}),\chi\omega^{a-\kappa}\varepsilon)^{(n)}\] 
(cf. Th\'eor\`eme 5.3.1 in [ibid]
). Thus we obtain the assertion. 
\end{proof}

\begin{rem}
Although Theorem 4.4 remains valid even for $p=2$, 
we may not deduce 
similar statements to Theorems 5.1 and 5.2 in the case where $p=2$, 
since there are some technical difficulties in understanding $\mathcal{E}_{\chi\omega^a}^{(n)}(\varepsilon(1+{\bf p})(1+{\bf p})^k -1)$ from automorphic and geometric viewpoints. 
\end{rem}

Now, as a summary 
of Theorems 4.4, 5.1 and 5.2 above, we have the following statement by which $\overline{\mathcal{E}}_{\chi\omega^a}^{\,(n)}(X)$ is said to be  
the {\it semi-ordinary $\Lambda$-adic Siegel Eisenseiten series} of genus $n$ and level $Mp^{\infty}$ associated with character $\chi\omega^a$: 

\begin{thm}
Let $M$ be an odd integer and $\chi$ a Dirichlet character modulo $M$ taken as follows: 
\begin{itemize}
\item[(i)] if $M=1$, then $\chi$ is trivial (or principal); 
\item[(ii)] if $M>1$, then $\chi$ is primitive and $\chi^2$ is locally non-trivial at every prime $l \mid M$; 
\end{itemize} 
For each integer $a$ with $0 \le a < p-1$, 
there exists a formal Fourier expansion \ 
$\overline{\mathcal{E}}_{\chi\omega^a}^{\,(n)}(X) \in \Lambda[[{\bf q}]]^{(n)}$ such that 
if $\kappa > n(n+1)/2$ and further if $\varepsilon : 1+p\mathbb{Z}_p \to (\mathbb{Q}_p^{\rm alg})^\times$ 
is a character of exact order $p^m$ with some $m \ge 0$, then 
\[
\overline{\mathcal{E}}_{\chi\omega^a}^{\,(n)}(\varepsilon(1+
p)(1+
p)^{\kappa} -1) \in \mathscr{M}_{\kappa}(\Gamma_0(M
p^{m+1}),\chi\omega^{a-\kappa}\varepsilon)^{(n)}
\]
is a Hecke eigenform whose eigenvalue of $U_{p,n}$ is $1$. 
In particular, 
\[
\overline{\mathcal{E}}_{\chi\omega^a}^{\,(n)}(\varepsilon(1+
p)(1+
p)^{\kappa} -1) = C_{\kappa,\,\varepsilon}^{(n)} \times \left\{
\begin{array}{ll}
(E_{\kappa,\,\chi}^{(n)})^{*} & \textrm{if $\omega^{a-\kappa}\varepsilon$ is trivial}, \\[2mm]
E_{\kappa,\,\chi\omega^{a-\kappa}\varepsilon}^{(n)} & \textrm{if $\omega^{2a-2\kappa}\varepsilon^2$ is non-trivial},  
\end{array}
\right.
\]
where $C_{\kappa,\,\varepsilon}^{(n)}=\mathcal{B}^{(n)}(\varepsilon(1+
p)(1+
p)^{\kappa} -1)$, 
that is, 
\begin{eqnarray*}
C_{\kappa,\,\varepsilon}^{(n)}
&=&
\prod_{i=1}^{[n/2]} \left\{\varepsilon^2(1+p)(1+p)^{2\kappa-2i} -1\right\}
\prod_{j=0}^{[n/2]} \left\{\varepsilon(1+p)(1+p)^{\kappa-j} -1\right\} \\
&=& 
\left\{\varepsilon(1+p)(1+p)^{\kappa} -1\right\} \prod_{i=1}^{[n/2]} \left\{\varepsilon(1+p)(1+p)^{\kappa-i} -1\right\}^2
\cdot \left\{\varepsilon(1+p)(1+p)^{\kappa-i} +1\right\}.
\end{eqnarray*}
\end{thm}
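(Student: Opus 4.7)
The plan is to assemble Theorem~5.4 as a synthesis of the three preceding structural results. Theorem~4.4 already furnishes the formal expansion $\overline{\mathcal{E}}_{\chi\omega^a}^{\,(n)}(X) = \mathcal{B}^{(n)}(X)\cdot\mathcal{E}_{\chi\omega^a}^{(n)}(X) \in \Lambda[[{\bf q}]]^{(n)}$ and identifies its specialization at $X=(1+p)^{\kappa}-1$ (for $\kappa\equiv a \pmod{p-1}$) with $(E_{\kappa,\chi}^{(n)})^{*}$. Theorem~5.1 handles the twisted arithmetic points $X=\varepsilon(1+p)(1+p)^{\kappa}-1$ at which $\omega^{2a-2\kappa}\varepsilon^{2}$ is non-trivial, by pinning down the specialization of $\mathcal{E}_{\chi\omega^a}^{(n)}$ to the classical Eisenstein series $E_{\kappa,\chi\omega^{a-\kappa}\varepsilon}^{(n)}$. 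Theorem~5.2 then extends classicality of the specialization, via the vertical control theorem of Hida, Pilloni, and Bijakowski--Pilloni--Stroh, to the weaker regime in which only $\omega^{a-\kappa}\varepsilon$ is non-trivial, provided $\kappa > n(n+1)/2$. These three ingredients jointly cover every pair $(\kappa,\varepsilon)$ subject to the uniform hypothesis of Theorem~5.4.

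First I would invoke the last clause of Theorem~4.4 to note that $\overline{\mathcal{E}}_{\chi\omega^a}^{\,(n)}(X)$ already lies in $\Lambda[[{\bf q}]]^{(n)}$, so that specialization at any $X_0\in \mathbb{Q}_p^{\rm alg}$ yields a bona fide formal Fourier series over a finite extension of $\mathbb{Z}_p[\chi]$. Evaluating at $X_0=\varepsilon(1+p)(1+p)^{\kappa}-1$, the membership of the specialization in $\mathscr{M}_{\kappa}(\Gamma_0(Mp^{m+1}),\,\chi\omega^{a-\kappa}\varepsilon)^{(n)}$ follows by a case split: when $\omega^{a-\kappa}\varepsilon$ is non-trivial this is exactly the conclusion of Theorem~5.2; when $\omega^{a-\kappa}\varepsilon$ is trivial (forcing $m=0$ and $\kappa\equiv a \pmod{p-1}$) it is inherited from Theorem~4.4 upon multiplying by the constant $\mathcal{B}^{(n)}((1+p)^{\kappa}-1)$.

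Next I would establish the two explicit identifications of the ``In particular'' clause. If $\omega^{a-\kappa}\varepsilon$ is trivial, then Theorem~4.4 gives
\[
\overline{\mathcal{E}}_{\chi\omega^a}^{\,(n)}((1+p)^{\kappa}-1) = \mathcal{B}^{(n)}((1+p)^{\kappa}-1)\cdot (E_{\kappa,\chi}^{(n)})^{*} = C_{\kappa,\varepsilon}^{(n)}\cdot (E_{\kappa,\chi}^{(n)})^{*}.
\]
If $\omega^{2a-2\kappa}\varepsilon^{2}$ is non-trivial, then multiplying Theorem~5.1 by $\mathcal{B}^{(n)}(\varepsilon(1+p)(1+p)^{\kappa}-1)$ yields
\[
\overline{\mathcal{E}}_{\chi\omega^a}^{\,(n)}(\varepsilon(1+p)(1+p)^{\kappa}-1) = C_{\kappa,\varepsilon}^{(n)}\cdot E_{\kappa,\chi\omega^{a-\kappa}\varepsilon}^{(n)}.
\]
The two displayed product expressions for $C_{\kappa,\varepsilon}^{(n)}$ are a routine algebraic rewriting of the definition of $\mathcal{B}^{(n)}(X)$. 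The Hecke eigenform property and the $U_{p,n}$-eigenvalue equal to~$1$ transfer from Theorem~4.2\,(I)--(II) in the trivial case, and from Lemma~2.1\,(I) together with the Fourier-coefficient formula underlying Theorem~5.1 in the twisted case, using that those coefficients are manifestly invariant under $T\mapsto pT$.

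The main obstacle I anticipate lies in the intermediate regime where $\omega^{a-\kappa}\varepsilon$ is non-trivial of exact order $2$, so that $\omega^{2a-2\kappa}\varepsilon^{2}$ becomes trivial and Theorem~5.1 no longer provides an explicit Eisenstein-series identification. Classicality is still covered by Theorem~5.2 and the Hecke eigenform property outside $Mp$ persists because specialization commutes with those Hecke operators, but the $U_{p,n}$-eigenvalue equal to~$1$ must be confirmed purely from the formal construction. This reduces to checking that the $\Lambda$-adic coefficients satisfy $\mathcal{A}_{\chi\omega^a}(pT;X) = \mathcal{A}_{\chi\omega^a}(T;X)$ for every nondegenerate $T$, which is the $\Lambda$-adic shadow of Equation~(10) already established inside the proof of Theorem~4.2 and inherited by formula~(14) through the local polynomials $F_l(T';\,\cdot)$ for $l\ne p$. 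Specializing this invariance at the given arithmetic point then yields the required eigenvalue uniformly across all three regimes, and the proof is complete.
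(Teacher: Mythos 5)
Your proposal is correct and follows essentially the same route as the paper, which states Theorem 5.4 without a separate proof precisely ``as a summary of Theorems 4.4, 5.1 and 5.2.'' Your extra care with the intermediate regime (where $\omega^{a-\kappa}\varepsilon$ is non-trivial but $\omega^{2a-2\kappa}\varepsilon^{2}$ is trivial), handled via the invariance of the coefficients $\mathcal{A}_{\chi\omega^a}(T;X)$ under $T\mapsto pT$, matches what the paper does implicitly in the proof of Theorem 5.2 and acknowledges in Remark 5.5.
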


\begin{rem}
The preceding theorem for $n > 1$ can be regarded as a satisfactory generalization of Fact 1.1 to some extent. 
The only exception is 
our lack of knowledge about the specialization $\overline{\mathcal{E}}_{\chi\omega^a}^{\,(n)}(\varepsilon(1+
p)(1+
p)^{\kappa} -1)$  
in the case where $\omega^{a-\kappa}\varepsilon$ is non-trivial, but $\omega^{2a-2\kappa}\varepsilon^2$ is trivial. 
As shown in Theorem 5.2, it has been proved that if 
$\omega^{a-\kappa}\varepsilon$ is non-trivial, regardless of whether $\omega^{2a-2\kappa}\varepsilon^2$ is trivial or not,
$\overline{\mathcal{E}}_{\chi\omega^a}^{\,(n)}(\varepsilon(1+
p)(1+
p)^{\kappa} -1) \in \mathscr{M}_{\kappa}(\Gamma_0(M
p^{m+1}),\chi\omega^{a-\kappa}\varepsilon)^{(n)}$ is a Hecke eigenform which is semi-ordinary at $p$. 
\if0
, but it seems different from 
$E_{\kappa,\,\chi\omega^{a-\kappa}\varepsilon}^{(n)}$ if $\omega^{2a-2\kappa}\varepsilon^2$ is trivial and $n >1$ (cf. \cite{Tak12}). 
\fi
\end{rem}



\end{document}